\newcommand{\V}{\textup{Vol}(4_1)}
\newcommand{\F}{J_{4_1,0}}
\newtheorem{lem}{Lemma}
\newtheorem{thm}{Theorem}
\newtheorem*{thm*}{Theorem}
\newtheorem{prop}[thm]{Proposition}
\theoremstyle{definition}
\xpatchcmd{\proof}{\itshape}{\normalfont\proofnameformat}{}{}
\newcommand{\proofnameformat}{}
\newcommand\blfootnote[1]{%
  \begingroup
  \renewcommand\thefootnote{}\footnote{#1}%
  \addtocounter{footnote}{-1}%
  \endgroup
}
\begin{document}

\renewcommand{\proofnameformat}{\bfseries}

\begin{center}
{\Large\textbf{Quantum invariants of hyperbolic knots and extreme values of trigonometric products}}

\blfootnote{\noindent \textbf{Keywords:}  continued fraction, quadratic irrational, quantum modular form, colored Jones polynomial, Kashaev invariant,  Sudler product. \textbf{Mathematics Subject Classification (2020):} 11J70, 57K16, 11L03, 26D05}

\vspace{5mm}

\textbf{Christoph Aistleitner$^1$ and Bence Borda$^{1,2}$}

\vspace{5mm}

{\footnotesize $^1$Graz University of Technology

Institute of Analysis and Number Theory

Steyrergasse 30, 8010 Graz, Austria

\vspace{5mm}

$^2$Alfr\'ed R\'enyi Institute of Mathematics

Re\'altanoda utca 13--15, 1053 Budapest, Hungary

Email: \texttt{aistleitner@math.tugraz.at} and \texttt{borda@math.tugraz.at}}
\end{center}

\begin{abstract}
In this paper we study the relation between the function $J_{4_1,0}$, which arises from a quantum invariant of the figure-eight knot, and Sudler's trigonometric product. We find $J_{4_1,0}$ up to a constant factor along continued fraction convergents to a quadratic irrational, and we show that its asymptotics deviates from the universal limiting behavior that has been found by Bettin and Drappeau in the case of large partial quotients. We relate the value of $J_{4_1,0}$ to that of Sudler's trigonometric product, and establish asymptotic upper and lower bounds for such Sudler products in response to a question of Lubinsky.
\end{abstract}

\section{Introduction}

Quantum knot invariants arise in theoretical quantum physics, where a knot can be regarded as the spacetime orbit of a charged particle. A typical example of such an invariant is the $n$-colored Jones polynomial of the knot $K=4_1$ (the figure-eight knot), which is given by\footnote{Throughout the paper, empty sums equal $0$, and empty products equal $1$.}
\[ J_{4_1,n} (q) = \sum_{N=0}^\infty q^{-nN} \prod_{j=1}^N (1 - q^{n-j}) (1 - q^{n+j}), \qquad n \ge 2, \]
defined for roots of unity $q$. For a fixed $q$, the mapping $n \mapsto J_{4_1,n}(q)$ is periodic in $n$, and so the definition can be extrapolated backwards in $n$ to give
\begin{equation}\label{Fq}
\F(q)=\sum_{N=0}^{\infty} |(1-q)(1-q^2) \cdots (1-q^N)|^2
\end{equation}
for a root of unity $q$. Note that both sums actually have only finitely many terms for any root of unity $q$. The figure-eight knot is the simplest hyperbolic knot; for other hyperbolic knots $K$ one obtains formulas for $J_{K,0}$ which are of a somewhat similar but more complicated nature. The so-called Kashaev invariant $\langle K \rangle_n = J_{K,n} (e(1/n))$, $n=1,2,\dots$ is another quantum invariant of the knot $K$; here and for the rest of the paper $e(x)=e^{2 \pi i x}$. The Kashaev invariant plays a key role in the volume conjecture, an open problem in knot theory which relates quantum invariants of knots with the hyperbolic geometry of knot complements. For more general background information, see \cite{MY}; in the context of our present paper we refer to \cite{BD} and the references therein.

The functions $J_{K,0}$ also have an interpretation as quantum modular forms as introduced by Zagier \cite{ZA}, and are predicted by Zagier's modularity conjecture to satisfy an approximate modularity property. For the figure-eight knot the modularity conjecture has been established \cite{BD,GZ}; that is, the function $\F(q)$ satisfies a remarkable modularity relation of the form $\F(e(\gamma r))/\F(e(r)) \sim \varphi_{\gamma} (r)$, where $\gamma \in \mathrm{SL}_2(\mathbb{Z})$ acts on rational numbers $r$ in a natural way, and the asymptotics holds as $r \to \infty$ along rational numbers with bounded denominators. The ratio $\F(e(\gamma r))/\F(e(r))$ as a function of $r$ in general has a jump at every rational point, and consequently the asymptotics of $\F(e(a/b))$ along rationals $a/b$ with $b \to \infty$ is quite involved. It is known \cite{AH} that
\[ \F(e(1/n)) \sim \frac{n^{3/2}}{\sqrt[4]{3}} \exp \left( \frac{\V}{2 \pi} n \right) \qquad \textrm{as } n \to \infty , \]
where
\[ \V = 4 \pi \int_0^{5/6} \log \left( 2 \sin (\pi x) \right) \, \mathrm{d}x \approx 2.02988 \]
is the hyperbolic volume of the complement of the figure-eight knot; this follows from the fact that the volume conjecture, as well as its stronger form, the arithmeticity conjecture have been verified for $4_1$. Bettin and Drappeau \cite[Theorem 3]{BD} found the asymptotics of $\F(e(a/b))$ for more general rationals $a/b$ in terms of their continued fraction expansions: if $a/b=[a_0;a_1, \dots, a_k]$, $a_k>1$, is a sequence of rational numbers such that $(a_1+\cdots +a_k)/k \to \infty$, then
\begin{equation}\label{BettinDrappeau}
\log \F(e(a/b)) \sim \frac{\V}{2 \pi} \left( a_1 + \cdots + a_k \right) .
\end{equation}
The result applies to a large class of rationals, including $1/n=[0;n]$, as well as to almost all reduced fractions with denominator at most $n$, as $n \to \infty$. Verifying a conjecture made by Bettin and Drappeau, in this paper we will show that \eqref{BettinDrappeau} in general fails to be true without the assumption $(a_1+\cdots +a_k)/k \to \infty$.

The individual terms in \eqref{Fq} can be expressed in terms of the so-called Sudler products, which are defined as
\begin{equation} \label{sudler_def}
P_N(\alpha):=\prod_{n=1}^N |2\sin (\pi n \alpha)|, \qquad \alpha \in \mathbb{R}.
\end{equation}
This could also be written using $q$-Pochhammer symbols as
\[ P_N(\alpha) =| (q;q)_N| = |(1-q)(1-q^2) \cdots (1-q^N)| \qquad \text{with } q = e(\alpha) , \]
but \eqref{sudler_def} seems to be the more common notation. The history of such products goes back at least to work of Erd\H os and Szekeres \cite{ESZ} and Sudler \cite{sudler} around 1960,  and they seem to arise in many different contexts (see \cite{LU} for references). Bounds for such products also play a role in the solution of the Ten Martini Problem by Avila and Jitomirskaya \cite{AJ}. It is somewhat surprising that, despite the obvious connection between \eqref{Fq} and \eqref{sudler_def}, we have not found a reference where both objects appear together. A possible explanation is that \eqref{Fq} is only well-defined when $q = e(a/b)$ with $a/b$ being a rational, while the asymptotic order of \eqref{sudler_def} as $N \to \infty$ is only interesting when $\alpha$ is irrational. We will come back to this issue in Proposition~\ref{transferprinciple} below. 

Note that for any rational number $a/b$ we have $P_N (a/b)=0$ whenever $N \ge b$; in particular, this means that $\F(e(a/b)) = \sum_{N=0}^{b-1} P_N (a/b)^2$. The asymptotics \eqref{BettinDrappeau} a fortiori holds for more general functionals of the sequence $(P_N(a/b))_{0 \le N <b}$. For instance, it is not difficult to see that under the same conditions as those of \eqref{BettinDrappeau} for any real $c>0$,
\begin{equation}\label{BettinDrappeauc}
\log \left( \sum_{N=0}^{b-1} P_N (a/b)^c \right)^{1/c} \sim \frac{\V}{4 \pi} \left( a_1 + \cdots + a_k \right) ,
\end{equation}
and also
\begin{equation}\label{BettinDrappeaumax}
\log \max_{0 \le N <b} P_N (a/b) \sim \frac{\V}{4 \pi} \left( a_1 + \cdots + a_k \right) .
\end{equation}
In particular, the maximal term in $\F(e(a/b)) = \sum_{N=0}^{b-1} P_N(a/b)^2$ is almost as large as the sum itself. For the sake of completeness, we formally deduce \eqref{BettinDrappeauc} and \eqref{BettinDrappeaumax} from \eqref{BettinDrappeau} in Section \ref{generalsection}.

It is natural to consider the same quantities along the sequence of convergents $p_k/q_k=[a_0;a_1, \dots, a_k]$ to a given irrational number $\alpha=[a_0;a_1,a_2, \dots]$. Formulas \eqref{BettinDrappeau}, \eqref{BettinDrappeauc} and \eqref{BettinDrappeaumax} give precise results for a large class of irrationals, but not when the sequence $a_k$ is bounded; our main result concerns this case. Recall that $\alpha$ is \textit{badly approximable} if and only if $a_k$ is bounded, and that $\alpha$ is a \textit{quadratic irrational} if and only if $a_k$ is eventually periodic; in particular, quadratic irrationals are badly approximable.
\begin{thm}\label{quadraticasymptotics} Let $\alpha$ be a quadratic irrational. For any real $c>0$ and any $k \ge 1$,
\[ \log \left( \sum_{N=0}^{q_k-1} P_N (p_k/q_k)^c \right)^{1/c} = K_c (\alpha ) k +O \left( \max \{ 1, 1/c \} \right) \]
and
\[ \log \max_{0 \le N < q_k} P_N (p_k/q_k) = K_{\infty} (\alpha ) k +O(1) \]
with some constants $K_c (\alpha), K_{\infty}(\alpha) >0$. The implied constants depend only on $\alpha$.
\end{thm}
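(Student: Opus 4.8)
The plan is to reduce both quantities to the log-Sudler sums $S_N:=\log P_N(p_k/q_k)=\sum_{n=1}^{N}\log|2\sin(\pi np_k/q_k)|$, so that one must estimate the partition function $\tfrac1c\log\sum_{N=0}^{q_k-1}e^{cS_N}$ and the maximum $\max_{0\le N<q_k}S_N$. Two elementary facts yield the lower bounds. Letting $x\to 0$ in the identity $\prod_{n=0}^{q-1}|2\sin(\pi(na/q+x))|=|2\sin(\pi qx)|$ (valid when $\gcd(a,q)=1$) gives $P_{q_k-1}(p_k/q_k)=q_k$, and the reflection $n\mapsto q_k-n$ together with the same identity gives $\sum_{N=0}^{q_k-1}S_N=\tfrac12 q_k\log q_k$; hence $\max_{N<q_k}S_N\ge\log q_k$ and, by the AM--GM inequality, $\bigl(\sum_{N<q_k}e^{cS_N}\bigr)^{1/c}\ge q_k^{1/c+1/2}$. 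Since $\log q_k=k\log\lambda(\alpha)+O(1)$, where $\lambda(\alpha)>1$ is the dominant eigenvalue of the product over one period of the continued fraction of the matrices $\bigl(\begin{smallmatrix}a_i&1\\1&0\end{smallmatrix}\bigr)$, every constant produced below is automatically positive; in fact $K_c(\alpha)\ge(\tfrac1c+\tfrac12)\log\lambda(\alpha)$ and $K_\infty(\alpha)\ge\log\lambda(\alpha)$.

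The main work is to extract a self-similar recursion. Using $q_k=a_kq_{k-1}+q_{k-2}$, split $\{0,\dots,q_k-1\}$ into the $a_k$ blocks $\{jq_{k-1},\dots,(j+1)q_{k-1}-1\}$, $0\le j<a_k$, plus a final block of length $q_{k-2}$. On the $j$-th block $np_k/q_k\equiv rp_k/q_k+jq_{k-1}p_k/q_k\pmod 1$, and $q_{k-1}p_k/q_k$ lies within $1/q_k$ of an integer, so $S_{jq_{k-1}+r}$ equals a level-$(k-1)$ partial log-product perturbed by the small shift $jq_{k-1}p_k/q_k$ plus the cumulative contribution of the preceding full blocks. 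Bookkeeping the perturbed partition functions obtained from $S_r$ by the substitution $r'p_k/q_k\mapsto r'p_k/q_k+x$ (and the corresponding maxima) turns the block splitting into an exact formula expressing the level-$k$ data as a finite $(\mathrm{sum},\times)$-combination of the level-$(k-1)$ data at the $a_k$ shifts $0,\,q_{k-1}p_k/q_k,\,\dots,\,(a_k-1)q_{k-1}p_k/q_k$, weighted by perturbed full-block products, together with one level-$(k-2)$ term; for the maximum the same holds with $(\mathrm{sum},\times)$ replaced by $(\max,+)$. All shifts here have size $O(1/q_k)$, comparable to the gap scale at level $q_{k-1}$; after rescaling the perturbation variable by $q_k$ at each step (the successive ratios $q_{k+1}/q_k$ converge along each residue class modulo the period) and replacing $p_k/q_k$ by $\alpha$, a change of size $O(1/(q_kq_{k+1}))$ that is negligible on every fixed scale, this becomes --- modulo uniformly small errors --- one fixed renormalization operation per partial quotient, acting on functions of the rescaled perturbation defined near the origin.

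Because $\alpha$ is a quadratic irrational its partial quotients are eventually periodic, so after an initial segment of $O(1)$ steps the recursion becomes the iteration of a single operator $\mathcal L_c$ over each period of $p$ partial quotients (respectively a single $(\max,+)$-linear map $\mathcal L_\infty$). The crux is to show that $\mathcal L_c^{\,p}$ has a spectral gap on a suitable function space --- equivalently, that the governing nonnegative, respectively $(\max,+)$, two-step matrix recursion is primitive --- so that its iterates grow like $\rho_c^{\,k/p}$ times a fixed strictly positive eigenfunction with geometrically decaying relative error; this is a Ruelle--Perron--Frobenius statement (respectively the $(\max,+)$ cyclicity theorem). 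Unwinding yields $\log\sum_{N<q_k}P_N(p_k/q_k)^c=\tfrac{k}{p}\log\rho_c+O(1)$ and $\max_{N<q_k}\log P_N(p_k/q_k)=\tfrac{k}{p}\log\rho_\infty+O(1)$, with the $O(1)$ depending only on $\alpha$; one then sets $K_c(\alpha)=\tfrac1{cp}\log\rho_c$ and $K_\infty(\alpha)=\tfrac1p\log\rho_\infty$. The $\tfrac1c\log q_k$-sized gap between the $\ell^c$- and $\ell^\infty$-quantities is genuine, but it is carried by $K_c(\alpha)$ itself (compatibly with the lower bound above), so that dividing by $c$ leaves an error of only $O(\max\{1,1/c\})$.

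The step I expect to be the main obstacle is controlling the small factors of the Sudler product across the renormalization. The dangerous terms $|2\sin(\pi n\alpha)|$ with $\|n\alpha\|$ (the distance of $n\alpha$ to the nearest integer) of order $1/q_k$ are exactly the ones moved \emph{through} their zero by the shifts $jq_{k-1}p_k/q_k$, which are themselves of order $1/q_k$; to see that this causes only a bounded, periodically varying distortion one needs sharp two-sided bounds on the perturbed products $\prod_{n=1}^{q_j}|2\sin(\pi(n\alpha+u/q_{j+1}))|$ that are uniform in $j$ and in $u$ over a fixed compact set --- in effect, a quantitative description of the shape of the Sudler product near its extremal values and of its behaviour under one renormalization step. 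The bounded-partial-quotient hypothesis is indispensable precisely here: it keeps every shift comparable to the gap scale, so the rescaled picture has no degenerations as $k\to\infty$. (Alternatively, one could first transfer, via Proposition~\ref{transferprinciple}, to the Sudler products $P_N(\alpha)$ of the irrational itself and invoke the known self-similar behaviour of $P_{q_j}(\alpha)$ and its perturbations for quadratic $\alpha$; the bookkeeping is essentially the same.)
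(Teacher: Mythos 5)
Your skeleton---decompose $\{0,\dots,q_k-1\}$ along the recursion $q_k=a_kq_{k-1}+q_{k-2}$ into blocks of length $q_{k-1}$ (this is the Ostrowski expansion in disguise), track the resulting perturbed products $\prod_{n\le q_j}|2\sin(\pi(n\alpha+\cdot))|$, exploit periodicity, and secure positivity of the constants from $P_{q_k-1}(p_k/q_k)=q_k$ and the reflection identity---is the same general route the paper takes. But as written the argument has two genuine gaps, and they sit exactly where the real work is. First, you yourself flag the uniform two-sided control of the perturbed products (uniform in the level $j$ and in the perturbation over the relevant range) as the main obstacle and leave it unresolved; without it none of your ``uniformly small errors'' is justified, since the factors with $\|n\alpha\|\asymp 1/q_j$ can be pushed arbitrarily close to a zero of the sine by the block shifts. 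In the paper this is Lemma \ref{intervallemma}, and it is not soft: it needs the bound $|\varepsilon_k(N)|\le(1-\kappa)q_k\delta_k$ coming from the carry rule of the Ostrowski expansion, Lubinsky's cotangent-sum estimate \eqref{cotangentsumrational}, and his bound for $P_N(\alpha)$ when $N$ has $O(1)$ nonzero Ostrowski digits.

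Second, your conclusion rests on an asserted spectral gap for the renormalization operator $\mathcal{L}_c^{\,p}$ (resp.\ a $(\max,+)$ cyclicity theorem) that is never constructed or proved; it is not even clear the problem fits those frameworks, since the ``state'' is a perturbation variable ranging over a continuum, the weights depend on the level through $q_k\delta_k$ and $q_{k-1}/q_k$ (which only converge as $k\to\infty$), and the digit constraint couples consecutive blocks. Relatedly, ``one fixed renormalization operation per partial quotient, modulo uniformly small errors'' would a priori give a cumulative error $O(k)$, not the required $O(1)$: the per-step errors must be summable. The paper gets all of this with lighter machinery: it proves locally uniform convergence of $P_{q_k}(\alpha,x)$ to explicit limit functions $G_r(\alpha,x)$ with the quantitative rate $O(q_k^{-1/2}\log^{3/4}q_k)$ (Theorem \ref{limittheorem}, via discrepancy bounds and Koksma's inequality), which makes the replacement errors geometrically summable, and then needs no spectral analysis at all---periodicity gives approximate sub- and superadditivity of $\log\sum_N G_N^c$ and $\log\max_N G_N$ (Lemma \ref{additivelemma}), and Fekete's lemma already yields $K_c(\alpha)k+O(\max\{1,1/c\})$ and $K_\infty(\alpha)k+O(1)$. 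Until you either prove the uniform bounds together with a summable convergence rate, or an actual Ruelle--Perron--Frobenius theorem for your operator, the proposal is a plausible program rather than a proof. (Minor point: with $\lambda(\alpha)$ the dominant eigenvalue of the one-period matrix product one has $\log q_k=\tfrac{k}{p}\log\lambda(\alpha)+O(1)$, so your stated lower bounds should carry a factor $1/p$.)
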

\noindent In particular, we have
$$
\log \F (p_k/q_k) = 2K_2 (\alpha ) k +O(1) .
$$
The proof of Theorem \ref{quadraticasymptotics} is based on the self-similar structure of quadratic irrationals; that is, on the periodicity of the continued fraction. In fact, it is not difficult to construct a badly approximable $\alpha$ for which the result is not true.

Quadratic irrationals exhibit a remarkable deviation from the universal behavior of irrationals with unbounded partial quotients. In contrast to \eqref{BettinDrappeau}, \eqref{BettinDrappeauc} and \eqref{BettinDrappeaumax}, the constants $K_c(\alpha)$ and $K_{\infty}(\alpha)$ in Theorem \ref{quadraticasymptotics} are in general not equal to each other, or to $\V \overline{a}(\alpha)/(4 \pi)$; here $\overline{a}(\alpha) = \lim_{k \to \infty} (a_1+\cdots +a_k)/k$ denotes the average partial quotient, which is of course simply the average over a period in the continued fraction expansion. We have not been able to calculate the precise value of $K_c (\alpha)$ for any specific quadratic irrational and for any $c$; as far as we can say, this seems to be a difficult problem. The precise value of $K_\infty (\alpha)$ is known for some quadratic irrationals with very small partial quotients, as a consequence of results in \cite{ATZ}; for $\alpha$ with larger partial quotients, calculating $K_\infty (\alpha)$ precisely also seems to be difficult.

However, it is possible to give fairly good general upper and lower bounds for $K_c(\alpha)$ and $K_\infty (\alpha)$ in terms of the partial quotients. Recall that for any quadratic irrational $\alpha$, we have $\log q_k = \lambda (\alpha) k+O(1)$ with some constant $\lambda (\alpha )>0$; see Section \ref{quadraticsection} for a simple way of computing $\lambda (\alpha )$ from the continued fraction. We start with three simple observations. First, for any rational number $a/b$ with $(a,b)=1$, the identity\footnote{The last step follows e.g.\ from taking the limit as $x \to 1$ in the factorization $(x^b-1)/(x-1)=\prod_{j=1}^{b-1} (x-e(j/b))$.}
\begin{equation}\label{lastterm}
P_{b-1}(a/b) = \prod_{n=1}^{b-1} |1-e(na/b)| = \prod_{j=1}^{b-1} |1-e(j/b)| =b
\end{equation}
provides the trivial lower bound $\max_{0 \le N <b} P_N(a/b) \ge b$. This immediately yields
\begin{equation}\label{triviallowerbound}
K_{\infty} (\alpha ) \ge \lambda (\alpha ) .
\end{equation}
Second, for any rational number $a/b$ with $(a,b)=1$ and any real $c>0$, we also have
\begin{equation}\label{trivialbounds}
\left( \frac{1}{b} \sum_{N=0}^{b-1} P_N (a/b)^c \right)^{1/c} \le \max_{0 \le N <b} P_N (a/b) \le \left( \sum_{N=0}^{b-1} P_N (a/b)^c \right)^{1/c},
\end{equation}
which in turn shows that
\begin{equation}\label{KcKinftybounds}
K_c(\alpha) - \frac{\lambda(\alpha)}{c} \leq K_\infty(\alpha) \leq K_c(\alpha) .
\end{equation}
Finally, we establish an antisymmetry of the sequence $(P_N(a/b))_{0 \le N <b}$; we call it the ``reflection principle''. It is based on an observation which was already made in \cite{ATZ}. 
\begin{prop}\label{dualityprinciple} For any rational number $a/b$ with $(a,b)=1$, and any integer $0 \leq N < b$,
$$
\log P_N(a/b) + \log P_{b-N-1}(a/b) = \log b.
$$
\end{prop}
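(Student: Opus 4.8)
The plan is to write the product $P_N(a/b)$ using $q$-Pochhammer notation with $q = e(a/b)$, so that $P_N(a/b) = |(q;q)_N| = \prod_{n=1}^N |1 - q^n|$, and then exploit the fact that multiplication by $a$ is a bijection on $\mathbb{Z}/b\mathbb{Z}$ (since $(a,b)=1$). The key identity~\eqref{lastterm} already tells us that the full product $P_{b-1}(a/b) = \prod_{n=1}^{b-1}|1-q^n| = b$. So the statement to prove is equivalent to
\[
P_N(a/b)\cdot P_{b-N-1}(a/b) = P_{b-1}(a/b),
\]
i.e.\ that the product of the first $N$ factors times the product of the first $b-N-1$ factors equals the product of all $b-1$ factors. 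This would follow immediately if the ``missing'' factors $\prod_{n=N+1}^{b-1}|1-q^n|$ from the first piece matched, as a set of complex absolute values, the factors $\prod_{n=1}^{b-N-1}|1-q^n|$ of the second piece.

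First I would observe that $|1 - q^n| = |1 - e(na/b)|$ depends only on $n \bmod b$, and in fact only on $\pm n \bmod b$, since $|1 - e(x)| = |1 - e(-x)|$. Then I would note that as $n$ runs over $\{N+1, N+2, \dots, b-1\}$, the residues $b - n$ run over $\{1, 2, \dots, b-N-1\}$, and $|1 - e((b-n)a/b)| = |1 - e(-na/b)| = |1 - e(na/b)|$. Hence
\[
\prod_{n=N+1}^{b-1} |1 - e(na/b)| = \prod_{m=1}^{b-N-1} |1 - e(ma/b)| = P_{b-N-1}(a/b).
\]
Combining this with $P_{b-1}(a/b) = P_N(a/b) \cdot \prod_{n=N+1}^{b-1}|1-e(na/b)|$ and with~\eqref{lastterm} gives $P_N(a/b) \cdot P_{b-N-1}(a/b) = b$, and taking logarithms yields the claim. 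The boundary case $N = 0$ (where $P_0 = 1$ is an empty product and $P_{b-1} = b$) and $N = b-1$ are handled directly by the same formula.

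I do not expect any genuine obstacle here: the only point requiring a little care is the reindexing $n \mapsto b - n$ and checking it is a bijection between the two index ranges, together with the symmetry $|1 - e(x)| = |1 - e(-x)|$; everything else is bookkeeping with~\eqref{lastterm}. The one thing worth stating explicitly is that $P_N(a/b) \neq 0$ for $0 \le N < b$ (so that the logarithms are finite), which holds because $na/b \notin \mathbb{Z}$ for $1 \le n \le N < b$ when $(a,b)=1$.
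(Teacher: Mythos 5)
Your proof is correct and follows essentially the same route as the paper: factor $P_{b-1}(a/b) = P_N(a/b)\cdot\prod_{n=N+1}^{b-1}|2\sin(\pi n a/b)|$, reindex the tail via $n \mapsto b-n$ using the symmetry of $|2\sin(\pi x)|$ to identify it with $P_{b-N-1}(a/b)$, and invoke the identity \eqref{lastterm} that $P_{b-1}(a/b)=b$. Your extra remark that the factors are nonzero (so the logarithms are finite) is a sensible, if routine, addition.
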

\noindent Note that Proposition \ref{dualityprinciple} immediately implies that
\[ \log \max_{0 \le N<b} P_N (a/b) + \log \min_{0 \le N<b} P_N (a/b) = \log b, \]
thus relating the largest with the smallest value of $P_N(a/b)$. In particular, all results for $\max_{0 \le N <b} P_N(a/b)$ have straightforward analogues for the minimum. As a nice further application of Proposition \ref{dualityprinciple}, we deduce the average value of $\log P_N (a/b)$ as
\begin{equation}\label{averagelogPN}
\frac{1}{b} \sum_{N=0}^{b-1} \log P_N (a/b) = \frac{\log b}{2} .
\end{equation}

From \eqref{triviallowerbound} it follows that $K_c(\alpha)$ and $K_{\infty} (\alpha)$ exceed $\V \overline{a}(\alpha )/(4 \pi)$ whenever the quadratic irrational $\alpha$ has relatively small partial quotients. For instance, $\frac{1+\sqrt{5}}{2}=[1;1,1,1,\dots]$, $\sqrt{2}=[1;2,2,2,\dots]$, $\frac{1+\sqrt{13}}{2}=[2;3,3,3,\dots]$, $\sqrt{5}=[2;4,4,4,\dots]$, $\frac{1+\sqrt{29}}{2}=[3;5,5,5,\dots]$ and $\sqrt{10}=[3;6,6,6,\dots]$ satisfy
\begin{equation}\label{Kinftylowerbound}
\begin{split} K_{\infty} \bigg( \frac{1+\sqrt{5}}{2} \bigg) &\ge \log \frac{1+\sqrt{5}}{2} \approx 0.4812, \qquad K_{\infty} (\sqrt{2}) \ge \log (1+\sqrt{2}) \approx 0.8814, \\ K_{\infty} \bigg( \frac{1+\sqrt{13}}{2} \bigg) &\ge \log \frac{3+\sqrt{13}}{2} \approx 1.1948, \qquad K_{\infty} (\sqrt{5}) \ge \log (2+\sqrt{5}) \approx 1.4436, \\ K_{\infty} \bigg( \frac{1+\sqrt{29}}{2}\bigg) &\ge \log \frac{5+\sqrt{29}}{2} \approx 1.6472, \qquad K_{\infty} (\sqrt{10}) \ge \log (3+\sqrt{10}) \approx 1.8184, \end{split}
\end{equation}
whereas
\[ \begin{split} \frac{\V}{4 \pi} \overline{a} \bigg( \frac{1+\sqrt{5}}{2} \bigg) &\approx 0.1615, \qquad \frac{\V}{4 \pi} \overline{a} (\sqrt{2}) \approx 0.3231, \\ \frac{\V}{4 \pi} \overline{a} \bigg( \frac{1+\sqrt{13}}{2} \bigg) &\approx 0.4846, \qquad \frac{\V}{4 \pi} \overline{a} (\sqrt{5}) \approx 0.6461, \\ \frac{\V}{4 \pi} \overline{a} \bigg( \frac{1+\sqrt{29}}{2} \bigg) &\approx 0.8077, \qquad \frac{\V}{4 \pi} \overline{a} (\sqrt{10}) \approx 0.9692 . \end{split} \]
In particular, the sequence of convergents to these quadratic irrationals violate \eqref{BettinDrappeau}, \eqref{BettinDrappeauc} and \eqref{BettinDrappeaumax}, demonstrating that the condition $(a_1+\cdots +a_k)/k \to \infty$ cannot be removed.

For a quadratic irrational $\alpha$ with large partial quotients, the constants $K_c (\alpha)$ and $K_{\infty} (\alpha )$ are nevertheless close to $\V \overline{a}(\alpha) /(4 \pi )$. Indeed, from results of Bettin and Drappeau \cite[Theorem 2 and Lemma 15]{BD} it follows that for any rational $a/b=[a_0;a_1, \dots, a_k]$, $a_k>1$,
\begin{equation}\label{BettinDrappeaugeneral}
\log \F(e(a/b)) = \frac{\V}{2 \pi} (a_1+\cdots +a_k) +O(A+k \log A)
\end{equation}
with $A=1+\max_{1 \le \ell \le k} a_{\ell}$ and a universal implied constant. A fortiori, for any real $c>0$,
\begin{equation}\label{BettinDrappeaugeneralc}
\log \left( \sum_{N=0}^{b-1} P_N(a/b)^c \right)^{1/c} = \frac{\V}{4 \pi} (a_1+\cdots +a_k) +O(A+k \max \{ 1, 1/c \} \log A)
\end{equation}
and
\begin{equation}\label{BettinDrappeaugeneralmax}
\log \max_{0 \le N <b} P_N (a/b) = \frac{\V}{4 \pi} (a_1+\cdots +a_k) +O(A+k \log A) .
\end{equation}
The last two relations immediately show that for any quadratic irrational $\alpha$ and any $0<c \le \infty$,
\begin{equation}\label{Kcestimate}
K_c (\alpha ) =  \frac{\V}{4 \pi} \overline{a}(\alpha ) +O( \max \{ 1, 1/c \} \log A(\alpha) )
\end{equation}
with $A(\alpha ) = 1+ \max_{k \ge 1} a_k$ and a universal implied constant.

In principle it might be the case that $K_\infty (\alpha )= K_c (\alpha )$ holds for $c$ beyond some threshold; note that this would be in accordance with the asymptotics \eqref{BettinDrappeauc} and \eqref{BettinDrappeaumax} for the case $(a_1 + \dots + a_k)/k \to \infty$. However, we rather believe that $K_\infty (\alpha)<K_c (\alpha )$ for all quadratic irrationals and all $c$. In this direction, from \eqref{averagelogPN} and the Jensen inequality applied with the convex function $e^{cx}$ we deduce that for any real $c>0$,
\[ K_c (\alpha ) \ge \left( \frac{1}{c} + \frac{1}{2} \right) \lambda (\alpha) ; \]
in light of \eqref{triviallowerbound} and \eqref{KcKinftybounds}, this is nontrivial for $0<c<2$. In particular, $K_{\infty} (\alpha) < K_c (\alpha )$ for all small enough $c$, and the set $\{ K_c (\alpha ) \, : \, c>0 \}$ is infinite for all quadratic irrationals.

As we mentioned earlier, previous results on the Sudler product concerned the asymptotics of $P_N(\alpha)$ as $N \to \infty$ with a given irrational $\alpha$, whereas $\F(e(a/b)) = \sum_{N=0}^{b-1} P_N(a/b)^2$ has been studied for rational $a/b$. To make these two types of results easier to compare, we prove the following simple ``transfer principle''.
\begin{prop}\label{transferprinciple} Let $\alpha=[a_0;a_1,a_2,\dots]$ be an irrational number with convergents $p_k/q_k=[a_0;a_1,\dots, a_k]$. For any integers $k \ge 1$ and $0 \le N < q_k$,
\[ \left| \log P_N (\alpha ) - \log P_N (p_k/q_k) \right| \ll \frac{\log A_k}{a_{k+1}} \]
with $A_k=1+\max_{1 \le \ell \le k} a_{\ell}$ and a universal implied constant.
\end{prop}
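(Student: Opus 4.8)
The plan is to compare the two products term by term, after replacing $2\sin(\pi n\gamma)$ by $1-e(n\gamma)$ so that the perturbation is tracked multiplicatively. Put $\beta=\alpha-p_k/q_k$; since $q_{k+1}=a_{k+1}q_k+q_{k-1}$ we have $|\beta|<1/(q_kq_{k+1})\le 1/(a_{k+1}q_k^2)$, so for $1\le n\le N<q_k$ the factor $e(n\beta)$ is close to $1$: $|1-e(n\beta)|\le 2\pi n|\beta|<2\pi/q_{k+1}$. On the other hand, with $m_n:=q_k\|np_k/q_k\|\in\{1,\dots,\lfloor q_k/2\rfloor\}$ (each value attained at most twice as $n$ runs through $1,\dots,q_k-1$, because $(p_k,q_k)=1$) one has $|1-e(np_k/q_k)|=2\sin(\pi m_n/q_k)\ge 4m_n/q_k$. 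Writing $z_n=e(np_k/q_k)$, $w_n=e(n\beta)$ and using $(1-z_nw_n)/(1-z_n)=1+z_n(1-w_n)/(1-z_n)$, we get
\[ \log P_N(\alpha)-\log P_N(p_k/q_k)=\mathrm{Re}\sum_{n=1}^N\log\!\left(1+\frac{z_n(1-w_n)}{1-z_n}\right),\qquad \left|\frac{z_n(1-w_n)}{1-z_n}\right|\le\frac{\pi n}{2m_nq_{k+1}}. \]

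First I would isolate the $O(1)$ indices $n$ for which this last bound is not $<1/2$; such $n$ force $m_n\le 3$ and $a_{k+1}\le 3$, and for each of them the corresponding summand $\log\big(|1-e(n\alpha)|/|1-e(np_k/q_k)|\big)$ is $O(1)$, using the best-approximation inequality $\|n\alpha\|\ge\|q_k\alpha\|\gg 1/q_{k+1}$ together with $\|np_k/q_k\|\asymp 1/q_k$. As $a_{k+1}=O(1)$ and $A_k\ge 2$ here, their total contribution is $\ll\log A_k/a_{k+1}$. For the remaining indices $\log(1+x)=x+O(|x|^2)$, and the quadratic tail sums to $\ll q_{k+1}^{-2}\sum_n n^2/m_n^2\ll(q_k/q_{k+1})^2\sum_n m_n^{-2}\ll a_{k+1}^{-2}$, which is harmless. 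So the problem reduces to the linear term $\mathrm{Re}\sum_{n=1}^N z_n(1-w_n)/(1-z_n)$.

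Using $z_n/(1-z_n)=-\tfrac12+\tfrac i2\cot(\pi np_k/q_k)$ and $1-w_n=2\sin^2(\pi n\beta)-i\sin(2\pi n\beta)$, this real part equals $-\sum_{n=1}^N\sin^2(\pi n\beta)+\tfrac12\sum_{n=1}^N\cot(\pi np_k/q_k)\sin(2\pi n\beta)$. The first sum is $\ll\beta^2N^3\ll 1/(a_{k+1}q_k)$. In the second, the factor $n\mapsto\sin(2\pi n\beta)$ is monotone on $[1,N]$ (as $N<q_k<1/(4|\beta|)$) and bounded by $2\pi n|\beta|<2\pi N/(q_kq_{k+1})$, so summation by parts gives
\[ \left|\sum_{n=1}^N\cot(\pi np_k/q_k)\sin(2\pi n\beta)\right|\ll\frac{N}{q_kq_{k+1}}\max_{1\le M<q_k}\left|\sum_{n=1}^M\cot(\pi np_k/q_k)\right|. \]
Hence it suffices to prove the partial cotangent-sum bound
\[ \max_{1\le M<q_k}\left|\sum_{n=1}^M\cot(\pi np_k/q_k)\right|\ll q_k\log A_k, \]
since then this term is $\ll\tfrac{N}{q_kq_{k+1}}\,q_k\log A_k\ll\log A_k/a_{k+1}$, and collecting all the estimates finishes the proof.

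This cotangent-sum bound is the crux, and the only point where the continued fraction $p_k/q_k=[a_0;a_1,\dots,a_k]$ enters essentially. Re-indexing $n\mapsto np_k\bmod q_k$ and using the antisymmetry $\cot(\pi m/q_k)=-\cot(\pi(q_k-m)/q_k)$, the partial sum becomes the difference of two sums of $\cot(\pi m/q_k)$ over residues lying in $(0,q_k/2)$; estimating with $|\cot(\pi m/q_k)|\asymp q_k/m$ and the three-distance theorem gives only $q_k\log q_k$, and the improvement to $q_k\log A_k$ comes from the cancellation between these two residue sets. I would extract this cancellation from the self-similar structure of the orbit $\{np_k/q_k\}$, splitting $n$ by its leading Ostrowski digit and inducting on the depth of the continued fraction (peeling off the leading partial quotient); the extremal case $p_k/q_k=1/q_k$, where $A_k\asymp q_k$ and the bound $q_k\log A_k$ is attained by an elementary computation, shows that the factor $\log A_k$ — not an absolute constant — is unavoidable. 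This cotangent cancellation is the main obstacle; all the remaining estimates are elementary.
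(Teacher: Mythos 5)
Your reduction is sound and, in fact, mirrors the paper's own argument: you compare the two products factor by factor, separate a linear term $\tfrac12\sum_{n\le N}\cot(\pi n p_k/q_k)\sin(2\pi n\beta)$ from quadratic remainders, handle the finitely many indices where the perturbation is not small, and then kill the linear term by summation by parts against the partial sums of $\cot(\pi n p_k/q_k)$. All of these steps check out (the bad indices indeed force $a_{k+1}\le 3$ and contribute $O(1)\ll \log A_k/a_{k+1}$ since $A_k\ge 2$; the quadratic tail is $\ll a_{k+1}^{-2}$; your monotonicity claim for $\sin(2\pi n\beta)$ needs $q_{k+1}\ge 4$, but small $q_{k+1}$ is trivial, and one can also avoid monotonicity entirely by bounding the increments $|\sin(2\pi(n+1)\beta)-\sin(2\pi n\beta)|\le 2\pi|\beta|$, as the paper does).

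The genuine gap is the cotangent-sum bound $\max_{1\le M<q_k}\bigl|\sum_{n=1}^{M}\cot(\pi n p_k/q_k)\bigr|\ll q_k\log A_k$, which you correctly identify as the crux but do not prove: what you offer ("splitting $n$ by its leading Ostrowski digit and inducting on the depth of the continued fraction") is a plan, not an argument, and this estimate is a substantive theorem in its own right -- precisely the point where the uniform dependence on $A_k$ (rather than a lossy $\log q_k$) must be extracted. The paper does not prove it either; it imports Lubinsky's Theorem 4.1, which gives $\bigl|\sum_{n=1}^{N}\cot(\pi n\alpha)\bigr|\le (124+24\log A_k)q_k$ for $0\le N<q_k$, and transfers it to the rational point $p_k/q_k$ by approximating $p_k/q_k$ with irrationals sharing the same initial partial quotients (this is \eqref{cotangentsumrational} in the text). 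So either cite that result (or the related work of Bettin--Drappeau on partial cotangent sums), in which case your write-up becomes a complete proof essentially identical to the paper's, or actually carry out the Ostrowski/induction scheme you sketch -- as it stands, the hardest ingredient of the proposition is left unestablished.
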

\noindent In particular, for any real $c>0$,
\[ \left| \log \left( \sum_{N=0}^{q_k-1} P_N(\alpha )^c \right)^{1/c} - \log \left( \sum_{N=0}^{q_k-1} P_N(p_k/q_k)^c \right)^{1/c} \right| \ll \frac{\log A_k}{a_{k+1}} \]
and
\[ \left| \log \max_{0 \le N <q_k} P_N (\alpha ) - \log \max_{0 \le N <q_k} P_N (p_k/q_k) \right| \ll \frac{\log A_k}{a_{k+1}} . \]
Applying the transfer principle to a quadratic irrational $\alpha$, Theorem \ref{quadraticasymptotics} can thus be restated in the irrational setting as
\begin{equation}\label{irrationalsetting}
\begin{split} \log \left( \sum_{N=0}^M P_N(\alpha )^c \right)^{1/c} &= \frac{K_c (\alpha )}{\lambda (\alpha)} \log M +O \left( \max \{ 1, 1/c \} \right) , \\ \log \max_{0 \le N \le M} P_N(\alpha ) &= \frac{K_{\infty}(\alpha )}{\lambda (\alpha)} \log M +O(1) , \end{split}
\end{equation}
where $\lambda (\alpha) >0$ is defined by $\log q_k = \lambda (\alpha)k+O(1)$, as before. For an arbitrary irrational $\alpha$ the reflection principle becomes, with the notation of Proposition \ref{transferprinciple},
\[ \log P_N (\alpha) + \log P_{q_k-N-1} (\alpha) = \log q_k + O \left( \frac{\log A_k}{a_{k+1}} \right) ; \]
in particular,
\begin{equation}\label{dualityirrational}
\log \max_{0 \le N <q_k} P_N (\alpha) + \log \min_{0 \le N <q_k} P_N (\alpha) = \log q_k +O \left( \frac{\log A_k}{a_{k+1}} \right) .
\end{equation}
For the sake of simplicity, we state all remaining results in the irrational setting only.

Erd\H{o}s and Szekeres \cite{ESZ} proved that $\liminf P_N (\alpha ) =0$ for almost all $\alpha$ in the sense of the Lebesgue measure, and asked whether the relation is actually true for all irrational $\alpha$. Lubinsky \cite{LU} gave more quantitative results on $P_N (\alpha)$ in terms of the continued fraction expansion of $\alpha$. The metric result of Erd\H{o}s and Szekeres was extended to a convergence/divergence type criterion, and it was also shown that $\liminf P_N(\alpha) =0$ whenever $\alpha$ has unbounded partial quotients. In addition, the results in the same paper imply
\[ \liminf_{N \to \infty} P_N (\alpha ) < \infty \qquad \textrm{and} \qquad \limsup_{N \to \infty} \frac{P_N (\alpha )}{N} >0 \]
for all irrational $\alpha$. Note that the relations in the previous line also follow from the identity \eqref{lastterm} and the transfer and reflection principles; in fact, the limsup relation is a far-reaching generalization of our trivial lower bound \eqref{triviallowerbound}. More recently, Grepstad, Kaltenb\"ock and Neum\"uller \cite{GKN} established the remarkable relation $\liminf P_N (\frac{1+\sqrt{5}}{2}) >0$, thus answering the question of Erd\H{o}s and Szekeres in the negative. On the other hand, however, in the paper \cite{GKN2} by the same authors it was shown that for the special irrational $\alpha = [0;a,a,a,\dots]=(\sqrt{a^2+4}-a)/2$ one has $\liminf P_N (\alpha)=0$ whenever $a$ is sufficiently large. Thus, rather remarkably, the question whether $\liminf P_N (\alpha)>0$ or $=0$ depends on the actual size of the partial quotients of $\alpha$ in a very sensitive way. Numerical experiments suggested a similar change of behavior for large values of $P_N$: for $\alpha= [0;a,a,a,\dots]$ it was conjectured that $\limsup P_N(\alpha)/N < \infty$ or $=\infty$, depending on the size of $a$. This problem was settled in \cite{ATZ}, where it was proved that for $\alpha= [0;a,a,a,\dots]$,
\begin{equation}\label{ale5}
\liminf_{N \to \infty} P_N (\alpha ) >0 \qquad \textrm{and} \qquad \limsup_{N \to \infty} \frac{P_N(\alpha )}{N} < \infty \qquad \textrm{if } a \le 5
\end{equation}
and
\begin{equation}\label{age6}
\liminf_{N \to \infty} P_N (\alpha ) =0 \qquad \textrm{and} \qquad \limsup_{N \to \infty} \frac{P_N(\alpha )}{N} = \infty \qquad \textrm{if } a \ge 6.
\end{equation}

Regarding general badly approximable irrationals $\alpha$, Lubinsky \cite{LU} proved that $|\log P_N (\alpha)| \ll \log N$; equivalently,
\begin{equation}\label{c1c2}
N^{-c_1} \ll P_N (\alpha ) \ll N^{c_2}
\end{equation}
with some constants $c_1, c_2$ and implied constants depending on $\alpha$. Let $c_1(\alpha)$ resp.\ $c_2(\alpha)$ denote the infimum of all $c_1$ resp.\ $c_2$ for which \eqref{c1c2} holds; Lubinsky remarked that it is an interesting problem to determine these constants. The reflection principle \eqref{dualityirrational} immediately shows that given a badly approximable $\alpha$ and a real constant $c$, we have $P_N (\alpha) \gg N^{-c} \Leftrightarrow P_N (\alpha ) \ll N^{1+c}$, with implied constants depending on $\alpha$. Therefore $c_2(\alpha)= c_1 (\alpha)+1$, which is a striking general relation that seems not to have been noticed so far. Thus establishing the optimal value of $c_1$ and that of $c_2$ in \eqref{c1c2} are actually one and the same problem.

For a quadratic irrational $\alpha$ our main result \eqref{irrationalsetting} shows that $c_2(\alpha ) =c_1(\alpha )+1= K_{\infty} (\alpha )/\lambda(\alpha)$, and in fact
\[ 0< \liminf_{N \to \infty} N^{c_1(\alpha)} P_N(\alpha) < \infty \quad \textrm{and} \quad 0< \limsup_{N \to \infty} \frac{P_N(\alpha )}{N^{c_2(\alpha )}} < \infty . \]
In particular,
\[ \liminf_{N \to \infty} P_N(\alpha )>0 \Longleftrightarrow \limsup_{N \to \infty} \frac{P_N(\alpha)}{N} < \infty \Longleftrightarrow K_{\infty}(\alpha ) = \lambda (\alpha ) , \]
where the last condition means that our trivial lower bound \eqref{triviallowerbound} holds with equality. Note that this relation also explains why the behavior of the liminf and the limsup in \eqref{ale5} and \eqref{age6} changes at the same critical value of $a$; this is also reflected in the six examples we gave in \eqref{Kinftylowerbound}, where we actually have equality everywhere except for $\sqrt{10}$, in which case the inequality is strict. It seems to be a difficult problem to give a complete characterization of all quadratic irrationals $\alpha$ such that $K_{\infty}(\alpha) = \lambda (\alpha)$; this is the subject of an upcoming paper of Grepstad, Neum\"uller and Zafeiropoulos \cite{GNZ}. 

The results in our paper allow us to give a fairly precise estimate for the constants $c_1(\alpha ), c_2(\alpha)$ in the question of Lubinsky. From \eqref{Kcestimate} with $c=\infty$ we obtain that for any quadratic irrational $\alpha$,
\[ c_2(\alpha ) = c_1(\alpha ) +1= \frac{\V}{4 \pi} \cdot \frac{\overline{a}(\alpha )}{\lambda (\alpha )} + O \left( \frac{\log A (\alpha)}{\lambda (\alpha )} \right) \]
with a universal implied constant. In particular, for $\alpha = [0;a,a,a,\dots]$ we have
\[ c_2(\alpha ) = c_1(\alpha ) +1  =  \frac{\V}{4 \pi} \cdot \frac{a}{\log \frac{a+\sqrt{a^2+4}}{2}} +O(1) = \frac{\V}{4 \pi} \cdot \frac{a}{\log a} +O(1) . \]

The discussion above shows that $K_{\infty} (\alpha) \ge K_{\infty} (\frac{1+\sqrt{5}}{2}) = \log \frac{1+\sqrt{5}}{2}$ for all quadratic irrationals. We do not know whether $K_c (\alpha) \ge K_c (\frac{1+\sqrt{5}}{2})$ for all quadratic irrationals and all $c>0$; in fact, we do not even know the precise value of $K_c (\frac{1+\sqrt{5}}{2})$ for any $c$. We mention, however, that numerical evidence found by Zagier \cite{ZA} and Bettin and Drappeau \cite{BD} suggests that for the golden mean we have $\log \F(e(p_k/q_k)) \approx 1.1 k$; that is, $K_2 (\frac{1+\sqrt{5}}{2}) \approx 0.55$.

In this context, it is an interesting question to characterize those values of $N$ for which particularly large resp.\ small values of $P_N(\alpha)$ occur. It is also interesting to estimate the relative number of indices which generate such values of $P_N (\alpha)$. This would shed some light on the relation between the numbers $K_c(\alpha)$ and $K_\infty(\alpha)$ in Theorem \ref{quadraticasymptotics} and \eqref{irrationalsetting}. Essentially, the problem is whether the sum $\sum_{N=0}^M P_N(\alpha)^c$ is dominated by a very small number of indices $N$ which produce particularly large values of $P_N$, or if there are enough such indices so that the full sum is of a significantly different asymptotic order than its maximal term. We plan to come back to all these questions in a future paper.

Finally, we mention a further open problem. In \cite{ZA} Zagier introduced the function $h(x) = \log \left(\F(e(x)) / \F(e(1/x)) \right)$. A conjecture of Zagier, established by Bettin and Drappeau in \cite{BD}, implies that $h$ has jumps at all rational points. Zagier also suggested that $h(x)$ is continuous at irrational values of $x$ (more precisely, since $h$ is formally only defined for rational $x$, the conjecture is that $h$ can be extended to all reals such that it is continuous at irrational values). Let $\alpha$ be a quadratic irrational whose continued fraction expansion is of the simple form $\alpha = [0;a,a,a,\dots]$, and let $p_k/q_k$ be its convergents. Then it is easily seen that $h(p_k/q_k) = \log \left(\F(e(p_k/q_k)) \right) - \log \left( \F(e(p_{k-1}/q_{k-1})) \right)$. Thus, while we cannot prove that $h(p_k/q_k)$ converges as $k \to \infty$, as a consequence of Theorem \ref{quadraticasymptotics} we can at least conclude that $K^{-1} \sum_{k=1}^K h(p_k/q_k)$ converges as $K \to \infty$. Note that if the error term in Theorem \ref{quadraticasymptotics} could be reduced from $O(1)$ to $o(1)$, then we could deduce that actually $h(p_k/q_k)$ itself converges as $k \to \infty$, so such a conclusion is just beyond the reach of our theorem. Finally, note that Theorem \ref{quadraticasymptotics} implies that if $h$ can indeed be continuously extended to $\alpha$, then the only possible value is $h(\alpha) = 2 K_2(\alpha)$ with $K_2 (\alpha)$ from Theorem \ref{quadraticasymptotics}. Thus our results can be seen as progress towards Zagier's problem, while it seems that there is still a long way to go for a full solution of the problem. From the discussion above, one might expect that the problem requires different approaches according to whether the partial quotients of $\alpha$ are large (say, as in the case $(a_1 + \cdots + a_k)/k \to \infty$), or small (say, bounded). The most difficult case could be the one when the partial quotients of $\alpha$ are small, but there is no particular structure such as periodicity. 

\section{General rationals and irrationals}\label{generalsection}

Recalling \eqref{trivialbounds}, to deduce \eqref{BettinDrappeauc} and \eqref{BettinDrappeaumax} from \eqref{BettinDrappeau} we only need to show that the condition $(a_1+\cdots +a_k)/k \to \infty$ implies that $\log b /(a_1+\cdots +a_k) \to 0$; indeed, this will show that for any $c>0$,
\[ \log \left( \sum_{N=0}^{b-1} P_N(a/b)^c \right)^{1/c} \sim \log \max_{0 \le N <b} P_N (a/b) . \]
From the recursion satisfied by the convergents we get $b \le (a_1+1)\cdots (a_k+1)$. Letting $\overline{a}_k=(a_1+\cdots +a_k)/k$, the AM--GM inequality gives
\[ \frac{\log b}{a_1+\cdots +a_k} \le \frac{\log ((a_1+1)\cdots (a_k+1))}{a_1+\cdots +a_k} \le \frac{\log (\overline{a}_k+1)}{\overline{a}_k} \to 0 , \]
and we are done. To deduce \eqref{BettinDrappeaugeneralc} and \eqref{BettinDrappeaugeneralmax} from \eqref{BettinDrappeaugeneral}, simply note that
\[ \log b \le \log (a_1+1) + \cdots +\log (a_k+1) = O(k \log A), \]
and hence \eqref{trivialbounds} shows that for any $c>0$,
\[ \log \left( \sum_{N=0}^{b-1} P_N(a/b)^c \right)^{1/c} = \log \max_{0 \le N < b} P_N (a/b) + O\left( \frac{k}{c} \log A \right) . \]

\subsection{The reflection principle}

\begin{proof}[Proof of Proposition \ref{dualityprinciple}] By the definition of Sudler products, for any integer $0 \le N <b$,
\begin{equation}\label{factorization}
P_N (a/b) \cdot \prod_{n=N+1}^{b-1} |2 \sin (\pi n a/b)| = P_{b-1} (a/b) .
\end{equation}
A simple reindexing shows that here
\[ \prod_{n=N+1}^{b-1} |2 \sin (\pi n a/b)| = \prod_{j=1}^{b-N-1} |2 \sin (\pi (b-j) a/b)| = P_{b-N-1} (a/b) . \]
As observed in \eqref{lastterm}, we also have $P_{b-1}(a/b)=b$. Hence \eqref{factorization} yields
\[ \log P_N (a/b) + \log P_{b-N-1} (a/b) = \log b, \]
as claimed.
\end{proof}

\subsection{The transfer principle}

Let $\alpha =[a_0;a_1,a_2,\dots]$ be an arbitrary irrational number with convergents $p_k/q_k=[a_0;a_1, \dots, a_k]$. Let $A_k =1+ \max_{1 \le \ell \le k} a_{\ell}$, and let $\| x \|$ denote the distance from a real number $x$ to the nearest integer. The sequence $q_k$ satisfies the recursion $q_k=a_k q_{k-1} + q_{k-2}$ with initial conditions $q_0=1$, $q_1=a_1$. Recall that for any $k \ge 1$ and any $0<n<q_k$, we have $\| n \alpha \| \ge \| q_{k-1} \alpha \|$. Further, if $k \ge 1$, or $k=0$ and $a_1>1$, then
\begin{equation}\label{||qkalpha||}
\frac{1}{q_{k+1}+q_k} < \| q_k \alpha \| < \frac{1}{q_{k+1}} .
\end{equation}

The main tool in the proof of the transfer principle is a bound on a cotangent sum proved by Lubinsky \cite[Theorem 4.1]{LU}, which states that for any $k \ge 1$ and any $0\le N<q_k$,
\begin{equation}\label{cotangentsumirrational}
\left| \sum_{n=1}^N \cot (\pi n \alpha ) \right| \le \left( 124+ 24 \log A_k \right) q_k .
\end{equation}
The same bound holds in the rational setting as well, i.e.

\begin{equation}\label{cotangentsumrational}
\left| \sum_{n=1}^N \cot (\pi n p_k/q_k ) \right| \le \left( 124+ 24 \log A_k \right) q_k .
\end{equation}
Indeed, we can apply \eqref{cotangentsumirrational} to a sequence of irrational $\alpha$'s converging to $p_k/q_k$, whose continued fraction expansions have initial segments identical to $p_k/q_k=[a_0;a_1, \dots, a_k]$. The same cotangent sum and various generalizations thereof in the rational setting have been studied recently in \cite{BD2}, and used in \cite{BD} to establish \eqref{BettinDrappeau}. Cotangent sums have a long history in analytic number theory; some of them are known to satisfy interesting reciprocity formulas, and they also appear in the Nyman--Beurling--B\'aez-Duarte approach to the Riemann hypothesis. See \cite{BC,BC2} for more details.

\begin{proof}[Proof of Proposition \ref{transferprinciple}] We consider the cases $q_k<200$ and $q_k \ge 200$ separately, starting with the former; the value $200$ is of course basically accidental.

First, assume that $q_k<200$. If $k=1$ and $a_1=1$, then $N=0$ and we are done; we may therefore assume that either $k \ge 2$, or $k=1$ and $a_1>1$. For any $0<n<q_k$ we thus have
\[ 2 \ge |2 \sin (\pi n \alpha )| \ge 4 \| n \alpha \| \ge 4 \| q_{k-1} \alpha \| \ge \frac{4}{q_k + q_{k-1}} > \frac{1}{100}, \]
and similarly
\begin{equation} \label{sin_dist}
2 \ge |2 \sin (\pi n p_k/q_k)| \ge 4 \| n p_k/q_k \| \ge \frac{4}{q_k} > \frac{1}{50} . 
\end{equation}
Consequently, $|\log P_N (\alpha)| \ll 1$ and $|\log P_N (p_k/q_k)| \ll 1$, and we are done provided $a_{k+1}$ is bounded. For large $a_{k+1}$ note that
\[ |\sin (\pi n \alpha ) - \sin (\pi n p_k/q_k)| \le \pi n |\alpha -p_k/q_k| \le \frac{\pi}{a_{k+1}} . \]
In particular, by \eqref{sin_dist} we have
\[ \left| \frac{\sin (\pi n \alpha )}{\sin (\pi n p_k/q_k)} -1 \right| < \frac{50 \pi}{a_{k+1}} , \]
and hence
\[ \left( 1-\frac{50 \pi}{a_{k+1}} \right)^{200} \le \frac{P_N (\alpha )}{P_N (p_k/q_k)} \le \left( 1+\frac{50 \pi}{a_{k+1}} \right)^{200} . \]
This finishes the proof in the case $q_k<200$.

Next, assume that $q_k \ge 200$. Observe that $q_k \ge q_{\ell} \ge a_{\ell}$ for all $1 \le \ell \le k$; in particular, $q_k \ge A_k-1$. From the assumption $q_k \ge 200$ we thus deduce $q_k \ge \sqrt{200(A_k-1)} \ge 10 \sqrt{A_k}$. Using a trigonometric identity, we can write
\begin{equation}\label{prod1+xn}
\frac{P_N (\alpha )}{P_N (p_k/q_k )} = \left| \prod_{n=1}^N \frac{\sin (\pi n \alpha )}{\sin (\pi n p_k/q_k )} \right| = \left| \prod_{n=1}^N (1+x_n+y_n) \right|,
\end{equation}
where
\[ \begin{split} x_n &= \cos (\pi n (\alpha -p_k/q_k )) -1, \\ y_n &= \sin (\pi n (\alpha -p_k/q_k)) \cot (n \pi p_k/q_k ) . \end{split} \]
Here
\[ \left| \alpha - \frac{p_k}{q_k} \right| < \frac{1}{q_k q_{k+1}} \le \frac{1}{q_k^2} \left( 1-\frac{1}{\frac{q_k}{q_{k-1}}+1} \right) \le \frac{1}{q_k^2} \left( 1-\frac{1}{A_k+1} \right) . \]
From the Taylor expansions of sine and cosine, for all $0<n<q_k$,
\[ |x_n| \le \frac{\pi^2 n^2}{2} \left| \alpha - \frac{p_k}{q_k} \right|^2 \le \frac{\pi^2}{2 q_k^2} \le \frac{1}{20 A_k} , \]
as well as
\[ \begin{split} \left| y_n \right| &\le \frac{|\sin (\pi n (\alpha -p_k/q_k ))|}{\sin (\pi /q_k )} \le \frac{\pi n |\alpha -p_k/q_k|}{\pi /q_k - \pi^3 /(6q_k^3)} \le \frac{1}{1-\pi^2 /(6q_k^2)} \left( 1-\frac{1}{A_k+1} \right) \\ &\le \frac{1}{1-\pi^2 /(600 A_k)} \left( 1-\frac{1}{A_k+1} \right) \le 1-\frac{3}{10 A_k} .  \end{split} \]
The previous two estimates give $|x_n+y_n| \le 1-1/(4 A_k)$; the point is that $x_n+y_n$ is bounded away from $-1$.

Observe that for any $|x| \le 1-1/(4A_k)$,
\[ e^{x-2x^2 \log (4A_k)} \le 1+x \le e^x . \]
Indeed, one readily verifies that the function $e^{-x+2x^2 \log (4A_k)} (1+x)$ attains its minimum on the interval $[-1+1/(4A_k), 1-1/(4A_k)]$ at $x=0$. Applying this estimate with $x=x_n+y_n$ in each factor of \eqref{prod1+xn}, we obtain
\begin{equation}\label{pnalpha/pnpkqk}
\begin{split} \frac{P_N (\alpha )}{P_N (p_k/q_k )} &= \exp \left( O \left( \left| \sum_{n=1}^N  (x_n+y_n) \right| + \sum_{n=1}^N (x_n^2+y_n^2) \log (4A_k) \right) \right) . \end{split}
\end{equation}
Note that the right-hand side of \eqref{pnalpha/pnpkqk} provides both an upper and a lower bound for the quotient on the left-hand side. Since
\[ |x_n| \le \frac{\pi^2 n^2}{2} \left| \alpha - \frac{p_k}{q_k} \right|^2 \le \frac{\pi^2}{2 a_{k+1}^2 q_k^2}, \]
the contribution of $x_n$ and $x_n^2$ in \eqref{pnalpha/pnpkqk} is negligible:
\[ \sum_{n=1}^N |x_n| + \sum_{n=1}^N x_n^2 \log (4A_k) \ll \frac{1}{a_{k+1}^2 q_k} + \frac{\log A_k}{a_{k+1}^4 q_k^3}. \]
From Lubinsky's bound on cotangent sums \eqref{cotangentsumrational}, summation by parts and
\[ \left| \sin (\pi (n+1) (\alpha -p_k/q_k)) - \sin (\pi n (\alpha -p_k/q_k)) \right| \le \pi |\alpha -p_k/q_k| \ll \frac{1}{a_{k+1}q_k^2}, \]
we obtain
\[ \left| \sum_{n=1}^N y_n \right| = \left| \sum_{n=1}^N \sin (\pi n (\alpha -p_k/q_k)) \cot (\pi n p_k/q_k) \right| \ll \frac{\log A_k}{a_{k+1}} . \]
Finally,
\[ \sum_{n=1}^N y_n^2 \log (4A_k) \ll \sum_{n=1}^N \frac{\log A_k}{a_{k+1}^2 q_k^2 \| np_k/q_k \|^2} \le \sum_{j=1}^{q_k-1} \frac{\log A_k}{a_{k+1}^2 q_k^2 \| j/q_k \|^2} \ll \frac{\log A_k}{a_{k+1}^2}, \]
since the integers $np_k$, $1 \le n \le N$ attain each nonzero residue class modulo $q_k$ at most once. Hence \eqref{pnalpha/pnpkqk} simplifies as
\[ \frac{P_N (\alpha )}{P_N (p_k/q_k )} = \exp \left( O \left( \frac{\log A_k}{a_{k+1}} \right) \right), \]
which proves the proposition.
\end{proof}

\section{Quadratic irrationals}\label{quadraticsection}

Fix a quadratic irrational $\alpha$. Throughout this section, constants and implied constants depend only on $\alpha$. The continued fraction expansion is of the form $\alpha =[a_0;a_1,\dots, a_s, \overline{a_{s+1}, \dots, a_{s+p}}]$, where the overline means period. As before, $p_k/q_k=[a_0;a_1,\dots, a_k]$ denotes the $k$-th convergent to $\alpha$; further, let $\delta_k=(-1)^k (q_k \alpha -p_k)$. The sequences $q_k$ and $p_k$ satisfy the same recursion; consequently, $\delta_k=-a_k \delta_{k-1}+\delta_{k-2}$ for all $k \ge 2$. If $k \ge 1$, or $k=0$ and $a_1>1$, then $\delta_k = \| q_k \alpha \|$.

For any $1 \le r \le p$, let
\[ T_r = \left( \begin{array}{cc} 0 & 1 \\ 1 & a_{s+r+p} \end{array} \right) \cdots \left( \begin{array}{cc} 0 & 1 \\ 1 & a_{s+r+1} \end{array} \right) . \]
The recursion $q_k = a_k q_{k-1} +q_{k-2}$ can be written in the form
\[ T_r^m \left( \begin{array}{c} q_{s+r-1} \\ q_{s+r} \end{array} \right) = \left( \begin{array}{c} q_{s+mp+r-1} \\ q_{s+mp+r} \end{array} \right), \qquad m=1,2,\dots \]
Observe that $\det T_r =(-1)^p$, and that $\mathrm{tr}\, T_r$ does not depend on $r$. Therefore the eigenvalues $\eta$ and $\mu$ of $T_r$ are the same for all $1 \le r \le p$. Since $q_k \to \infty$ exponentially fast, we have, say, $\eta>1$ and $\mu=(-1)^p/\eta$. Consequently, the recursions for $q_k$ and $\delta_k$ have solutions
\begin{equation}\label{recursionsolutions}
\begin{split} q_{s+mp+r} &= C_r \eta^m + D_r (-1)^{mp} \eta^{-m}, \\ \delta_{s+mp+r} &= E_r \eta^{-m} \end{split}
\end{equation}
with some constants $C_r, E_r >0$ and $D_r$, $1 \le r \le p$. In particular, $\log q_k = \lambda (\alpha)k +O(1)$ with $\lambda (\alpha) = \log \eta^{1/p}$.

\begin{lem}\label{deltaklemma} For any $k \ge 0$, we have $\kappa \delta_k \le \delta_{k+1} \le (1-\kappa) \delta_k$ with some constant $\kappa >0$.
\end{lem}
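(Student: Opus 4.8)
The plan is to exploit the explicit formula \eqref{recursionsolutions} for $\delta_k$ together with the positivity of the partial quotients. Write $k = s + mp + r$ with $1 \le r \le p$ and $m \ge 0$ (for the finitely many small $k$ one can argue directly, or absorb them into the implied constants). The key point is that $\delta_{k+1}/\delta_k$ is, for each fixed residue $r$ of $k-s$ modulo $p$, \emph{eventually constant} in $m$: indeed, from $\delta_{s+mp+r} = E_r \eta^{-m}$ we get
\[
\frac{\delta_{s+mp+r+1}}{\delta_{s+mp+r}} = \frac{E_{r+1}}{E_r} \quad (1 \le r \le p-1), \qquad \frac{\delta_{s+(m+1)p+1}}{\delta_{s+mp+p}} = \frac{E_1}{\eta E_p},
\]
so $\delta_{k+1}/\delta_k$ takes only $p$ distinct values as $k$ ranges over all sufficiently large integers. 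Each of these values lies strictly in $(0,1)$: positivity follows from $\delta_k = \|q_k\alpha\| > 0$ for all $k$ large, and the upper bound $\delta_{k+1} < \delta_k$ follows from the recursion $\delta_{k+1} = -a_{k+1}\delta_k + \delta_{k-1}$ combined with $0 < \delta_{k+1} < \delta_{k-1}$ (equivalently from \eqref{||qkalpha||}, which gives $\delta_k < 1/q_{k+1} < 1/q_k < \delta_{k-1}$ for $k \ge 1$). Hence taking $\kappa$ to be the minimum of $\delta_{k+1}/\delta_k$ and of $1 - \delta_{k+1}/\delta_k$ over the finitely many relevant values — and shrinking it if necessary to accommodate the finitely many small $k$ — yields a single constant $\kappa > 0$ with $\kappa\delta_k \le \delta_{k+1} \le (1-\kappa)\delta_k$ for all $k \ge 0$.

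An alternative, more self-contained route avoids \eqref{recursionsolutions}: from \eqref{||qkalpha||} one has $\delta_k \asymp 1/q_{k+1}$ with absolute implied constants, so $\delta_{k+1}/\delta_k \asymp q_{k+1}/q_{k+2}$, and since the partial quotients are bounded (say by $A(\alpha)$) the recursion $q_{k+2} = a_{k+2}q_{k+1} + q_k$ gives $q_{k+1} < q_{k+2} < (A(\alpha)+1)q_{k+1}$, so $q_{k+1}/q_{k+2}$ is bounded away from both $0$ and $1$. This immediately produces the two-sided bound with a constant $\kappa$ depending only on $A(\alpha)$ and on the absolute constants in \eqref{||qkalpha||}; in fact this version works for any badly approximable $\alpha$, which is why the statement is placed where it is.

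I expect no real obstacle here: the lemma is essentially a repackaging of the exponential growth of $q_k$ and the boundedness of partial quotients, and the only thing to be slightly careful about is the handling of the initial indices $k$ (where \eqref{||qkalpha||} requires $k \ge 1$, or $k = 0$ with $a_1 > 1$) and the case $a_1 = 1$, which one disposes of by checking $\delta_0, \delta_1$ by hand. I would present the second route, as it is shorter and makes transparent that $\kappa$ depends only on $\alpha$ through a bound on its partial quotients.
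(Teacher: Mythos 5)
Your first route is correct and genuinely different from the paper's proof: using the exact solution \eqref{recursionsolutions}, the ratio $\delta_{k+1}/\delta_k$ takes only the $p$ values $E_{r+1}/E_r$ and $E_1/(\eta E_p)$ once $k>s$, each lying strictly in $(0,1)$ by positivity and the strict decrease of $\delta_k$, and the finitely many initial indices are absorbed into $\kappa$. This is short but leans on periodicity, whereas the paper argues elementarily, in a way that works for any badly approximable $\alpha$ and gives an explicit constant: first $\delta_k<1/q_{k+1}\le (a_{k+2}+2)/(q_{k+2}+q_{k+1})<(a_{k+2}+2)\delta_{k+1}$ from \eqref{||qkalpha||} and $q_{k+2}\le (a_{k+2}+1)q_{k+1}$, and then the upper bound from the recursion, $\delta_{k+1}\le a_{k+2}\delta_{k+1}=\delta_k-\delta_{k+2}\le\delta_k-\delta_{k+1}/(a_{k+3}+2)$, so that $\kappa=1/(\max_{k\ge 1}a_k+3)$ works. (Also, your parenthetical chain is backwards at the last step: \eqref{||qkalpha||} gives $\delta_{k-1}<1/q_k$, not $1/q_k<\delta_{k-1}$; the correct chain is $\delta_k<1/q_{k+1}\le 1/(q_k+q_{k-1})<\delta_{k-1}$, or use the recursion as you also indicate.)

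The second route --- the one you say you would present --- has a genuine gap in the upper bound $\delta_{k+1}\le(1-\kappa)\delta_k$. The comparison $\delta_k\asymp 1/q_{k+1}$ coming from \eqref{||qkalpha||} carries multiplicative slack (a factor $(q_{k+1}+q_k)/q_{k+1}$, i.e.\ up to $2$), and ``bounded away from $1$'' does not survive a two-sided $\asymp$ whose constants are not $1+o(1)$: even using the sharp inequalities, the best you can extract is $\delta_{k+1}/\delta_k<(q_{k+1}+q_k)/q_{k+2}$, and when $a_{k+2}=1$ one has $q_{k+2}=q_{k+1}+q_k$, so this bound is exactly $1$ and nothing is gained; with the cruder absolute-constant version the bound can even exceed $1$. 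So the argument fails precisely for irrationals with partial quotients equal to $1$ infinitely often, e.g.\ the golden mean, one of the paper's central examples (only the lower bound $\kappa\delta_k\le\delta_{k+1}$ comes out of this route). The missing idea is the one the paper uses: feed the already-proved lower bound back into the exact recursion, $\delta_{k+1}\le a_{k+2}\delta_{k+1}=\delta_k-\delta_{k+2}\le \delta_k-\delta_{k+1}/(a_{k+3}+2)$, which yields $\delta_{k+1}\le\delta_k(a_{k+3}+2)/(a_{k+3}+3)$. Either present your first route, or repair the second along these lines.
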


\begin{proof} We claim that $\delta_k \le (a_{k+2}+2) \delta_{k+1}$ for all $k \ge 0$. Indeed, if $k=0$, $a_1=1$ this can be verified ``by hand''; else, from \eqref{||qkalpha||} we obtain
\[ \delta_k < \frac{1}{q_{k+1}} \le \frac{a_{k+2}+2}{q_{k+2}+q_{k+1}} < (a_{k+2}+2) \delta_{k+1} . \]
On the other hand,
\[ \delta_{k+1} \le a_{k+2} \delta_{k+1} = \delta_k - \delta_{k+2} \le \delta_k - \frac{1}{a_{k+3}+2} \delta_{k+1}, \]
and hence $\delta_{k+1} \le \delta_k (a_{k+3}+2)/(a_{k+3}+3)$. The claim thus follows with $\kappa=1/(\max_{k \ge 1} a_k +3)$.
\end{proof}

\subsection{Perturbed Sudler products}\label{sec_pert}

The fundamental objects in the proof of Theorem \ref{quadraticasymptotics} are ``perturbed'' versions of the Sudler product defined as
\[ P_{q_k} (\alpha, x) := \prod_{n=1}^{q_k} |2 \sin (\pi (n\alpha +(-1)^k x/q_k))|, \qquad x \in \mathbb{R}. \]
Perturbed Sudler products were first introduced by Grepstad, Kaltenb\"ock and Neum\"uller \cite{GKN}, and have since been used in \cite{ATZ} and \cite{GNZ}. The relevance of these functions come from the Ostrowski expansion of integers, which we now recall. Any integer $N \ge 0$ can be written in a unique way in the form $N=\sum_{k=0}^{\infty} b_k q_k$, where $0 \le b_0 \le a_1-1$ and $0 \le b_k \le a_{k+1}$, $k \ge 1$ are integers satisfying the extra rule that $b_{k-1}=0$ whenever $b_k=a_{k+1}$. Of course, the series only has finitely many nonzero terms; more precisely, if $0 \le N <q_{k+1}$, then $b_{k+1}=b_{k+2}=\cdots =0$. Given an integer $N \ge 0$ with Ostrowski expansion $N=\sum_{k=0}^{\infty} b_k q_k$, let us introduce the notation
\[ \varepsilon_k (N) := q_k \sum_{\ell =k+1}^{\infty} (-1)^{k+\ell} b_{\ell} \delta_{\ell}, \]
where $\delta_\ell=(-1)^\ell (q_\ell \alpha -p_\ell)$ was already defined at the beginning of this section. Further, we shall write $f(x)=|2 \sin (\pi x)|$.
\begin{lem}\label{ostrowskilemma} For any integer $N \ge 0$ with Ostrowski expansion $N=\sum_{k=0}^{\infty} b_k q_k$,
\[ P_N (\alpha ) = \prod_{k=0}^{\infty} \prod_{b=0}^{b_k-1} P_{q_k} (\alpha, bq_k \delta_k + \varepsilon_k (N)) . \]
\end{lem}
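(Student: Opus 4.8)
The plan is to split the product $P_N(\alpha)=\prod_{n=1}^N f(n\alpha)$ over consecutive blocks of indices, one block for each digit occurrence in the Ostrowski expansion $N=\sum_{k\ge 0} b_k q_k$, and to identify each block product with a perturbed Sudler product. Concretely, I would process the (finitely many) nonzero digits in \emph{decreasing} order of the level $k$: for a level $k$ with $b_k>0$ and an index $b\in\{0,1,\dots,b_k-1\}$, associate the block of $q_k$ consecutive integers
\[ B_{k,b}=\{\, M_k+bq_k+1,\ M_k+bq_k+2,\ \dots,\ M_k+(b+1)q_k \,\}, \qquad M_k:=\sum_{\ell>k} b_\ell q_\ell . \]
Since $\sum_k b_k q_k=N$ and the blocks are laid down consecutively from the top, the family $\{B_{k,b}\}$ is a partition of $\{1,2,\dots,N\}$, and therefore $P_N(\alpha)=\prod_{k}\prod_{b=0}^{b_k-1}\prod_{m=1}^{q_k} f\big((M_k+bq_k+m)\alpha\big)$.

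The next step is to evaluate the innermost product. Using $q_\ell\alpha=p_\ell+(-1)^\ell\delta_\ell$ and the $1$-periodicity of $f$, one has, modulo integers,
\[ (M_k+bq_k+m)\alpha \equiv m\alpha + \sum_{\ell>k}(-1)^\ell b_\ell\delta_\ell + (-1)^k b\,\delta_k \pmod 1 . \]
By the very definition of $\varepsilon_k(N)$ we have $\sum_{\ell>k}(-1)^\ell b_\ell\delta_\ell=(-1)^k\varepsilon_k(N)/q_k$, and also $(-1)^k b\,\delta_k=(-1)^k (bq_k\delta_k)/q_k$; hence the right-hand side equals $m\alpha+(-1)^k\big(bq_k\delta_k+\varepsilon_k(N)\big)/q_k$. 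Applying $f$ and multiplying over $m=1,\dots,q_k$ gives exactly $\prod_{m=1}^{q_k} f\big(m\alpha+(-1)^k x/q_k\big)=P_{q_k}(\alpha,x)$ with $x=bq_k\delta_k+\varepsilon_k(N)$. Substituting this into the displayed factorization of $P_N(\alpha)$ yields the lemma.

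There is no serious obstacle here; the argument is essentially bookkeeping. The one point that needs care is the placement of the blocks: processing the Ostrowski digits from the highest level downward is precisely what makes the accumulated phase $M_k\alpha \bmod 1$ coincide with $(-1)^k\varepsilon_k(N)/q_k$, and it is this choice that pins down the perturbation parameter. Two further remarks worth noting in the write-up: the extra carry rule in the Ostrowski expansion ($b_{k-1}=0$ whenever $b_k=a_{k+1}$) is irrelevant for this identity, since only $N=\sum_k b_k q_k$ with finite support is used; and a level $k$ with $b_k=0$ contributes an empty product equal to $1$, consistently with the stated formula. Finally, one must keep track of the sign $(-1)^k$ and the normalization by $q_k$ in the definition of $P_{q_k}(\alpha,\cdot)$, both of which are accounted for by writing $q_k\alpha-p_k=(-1)^k\delta_k$.
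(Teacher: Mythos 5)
Your proof is correct and follows essentially the same route as the paper: the same block decomposition of $\{1,\dots,N\}$ according to the Ostrowski digits (your $M_k$ is the paper's tail sum $N_{k+1}=\sum_{\ell>k}b_\ell q_\ell$), the same reduction modulo $1$ using $q_\ell\alpha\equiv(-1)^\ell\delta_\ell$, and the same identification of each block with $P_{q_k}(\alpha, bq_k\delta_k+\varepsilon_k(N))$. Nothing further is needed.
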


\begin{proof} Note that only finitely many factors are different from $1$. Let $N_k=\sum_{\ell=k}^{\infty} b_{\ell} q_{\ell}$. Then $N=N_0 
\ge N_1 \ge N_2 \ge \cdots$, and $N_k=0$ for all large enough $k$. By the definition of Sudler products,
\[ \begin{split} P_N (\alpha ) &= \prod_{k=0}^{\infty} \prod_{n=N_{k+1}+1}^{N_k} f(n \alpha ) \\ &=\prod_{k=0}^{\infty} \prod_{n=1}^{b_k q_k} f(n\alpha +N_{k+1}\alpha ) \\ &= \prod_{k=0}^{\infty} \prod_{b=0}^{b_k-1} \prod_{n=1}^{q_k} f \left( n\alpha +b q_k \alpha + \sum_{\ell =k+1}^{\infty} b_{\ell} q_{\ell} \alpha \right) \\ &= \prod_{k=0}^{\infty} \prod_{b=0}^{b_k-1} \prod_{n=1}^{q_k} f\left( n \alpha + (-1)^k b \delta_k + \sum_{\ell =k+1}^{\infty} (-1)^{\ell} b_{\ell} \delta_{\ell} \right) \\ &= \prod_{k=0}^{\infty} \prod_{b=0}^{b_k-1} P_{q_k} (\alpha, b q_k \delta_k + \varepsilon_k (N)) . \end{split} \]
\end{proof}

The main message of the next lemma is that for quadratic irrational $\alpha$, the function $P_{q_k}(\alpha, x)$ has a positive lower bound at all points which appear in the claim of Lemma \ref{ostrowskilemma}. From now on let $1 \le [k] \le p$ denote the remainder of $k-s$ modulo $p$, where $s$ is the length of the pre-period in the continued fraction for $\alpha$; that is, if $k=s+mp+r$, then $[k]=r$.
\begin{lem}\label{intervallemma} \hspace{1mm}
\begin{enumerate}
\item[(i)] For any integer $N \ge 0$ with Ostrowski expansion $N=\sum_{k=0}^{\infty} b_k q_k$, any $k \ge 0$ and any $0 \le b<b_k$ we have $P_{q_k}(\alpha , bq_k \delta_k + \varepsilon_k (N)) \gg 1$ uniformly in $N$, $k$ and $b$.
\item[(ii)] There exist compact intervals $I_r$, $1 \le r \le p$, and a constant $k_0>s$ with the following properties. First, for any integer $N \ge 0$ with Ostrowski expansion $N=\sum_{k=0}^{\infty} b_k q_k$, any $k \ge k_0$ and any $0 \le b<b_k$ we have $bq_k \delta_k + \varepsilon_k (N) \in I_{[k]}$. Second, $P_{q_k}(\alpha, x) \gg 1$ on $I_{[k]}$ uniformly in $k \ge k_0$.
\end{enumerate}
\end{lem}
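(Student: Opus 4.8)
The plan is to control the perturbation parameter $x = bq_k\delta_k + \varepsilon_k(N)$ that appears in Lemma \ref{ostrowskilemma} and then show that $P_{q_k}(\alpha,x)$ is bounded below uniformly on the relevant range. First I would estimate the two pieces of $x$ separately. Since $0 \le b < b_k \le a_{k+1}$ and $q_k\delta_k = q_k\|q_k\alpha\| < q_k/q_{k+1} < 1$, the geometric term satisfies $0 \le bq_k\delta_k < a_{k+1} q_k \delta_k$, and using $q_{k+1} = a_{k+1}q_k + q_{k-1}$ together with \eqref{||qkalpha||} this is bounded by a constant (in fact $< 1$). For the tail $\varepsilon_k(N) = q_k\sum_{\ell \ge k+1}(-1)^{k+\ell}b_\ell\delta_\ell$, I would use $0 \le b_\ell \le a_{\ell+1}$, the identity $a_{\ell+1}\delta_\ell = \delta_{\ell-1} - \delta_{\ell+1}$ (from $\delta_{\ell+1} = -a_{\ell+1}\delta_\ell + \delta_{\ell-1}$), and the fact that $\delta_\ell$ decays geometrically (Lemma \ref{deltaklemma}) to get $|\varepsilon_k(N)| \ll q_k\delta_k \ll 1$ uniformly. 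So $x$ lies in a bounded set; this already gives part (i) once we know $P_{q_k}(\alpha,\cdot)$ is bounded below on bounded sets — but that is exactly what needs care, because $P_{q_k}(\alpha,x)$ can be small or zero when $x$ is close to an integer multiple of something forcing a sine factor to vanish.

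The key point is that the Ostrowski digit restriction $b_{k-1} = 0$ whenever $b_k = a_{k+1}$ keeps $x$ away from the dangerous values. More precisely, I expect $x$ to lie in an interval strictly inside $(-q_k\delta_k \cdot(\text{const}),\, q_{k}\delta_k\cdot(\text{const}) + 1)$ that stays a definite distance — of order one in the natural $x/q_k$ scaling, i.e.\ distance $\gg \delta_k \gg 1/q_{k+1}$ in $\alpha$-units — from the points where $\sin(\pi(n\alpha + (-1)^k x/q_k))$ vanishes for $1 \le n \le q_k$. The zeros in question are at $x/q_k \equiv -n\alpha \pmod 1$; since $\{n\alpha \bmod 1 : 1 \le n \le q_k\}$ are spaced by at least $\|q_k\alpha\| = \delta_k$ (this is the classical three-distance / best-approximation property, and \eqref{||qkalpha||} quantifies it), a gap of the right size keeps $P_{q_k}(\alpha,x)$ away from $0$. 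For a quadratic irrational the period structure \eqref{recursionsolutions} makes all the relevant constants ($C_r$, $D_r$, $E_r$, $\eta$) genuinely periodic in $k$, so after rescaling by $q_k$ the function $x \mapsto P_{q_k}(\alpha, x)$ converges, along each residue class $[k] = r$, to a fixed continuous limiting function on a fixed compact interval $I_r$; this is the mechanism that both produces the intervals $I_r$ in part (ii) and upgrades the "bounded below on a bounded set" to a genuine uniform positive lower bound. I would choose $k_0$ large enough that the finitely many pre-period terms have washed out and the convergence is close enough.

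The execution then splits into: (a) show $x \in I_{[k]}$ for $k \ge k_0$, which is the digit-bookkeeping argument above, carried out with the explicit constants from Lemma \ref{deltaklemma}; (b) show $P_{q_k}(\alpha,x) \gg 1$ on $I_{[k]}$ uniformly in $k \ge k_0$, via the rescaling-to-a-limit argument using \eqref{recursionsolutions} plus a lower bound on $\min_{x \in I_r}$ of the limit function (which is positive because $I_r$ avoids the zero set); and (c) handle the finitely many small $k$ (namely $0 \le k < k_0$) by hand — here $q_k$ is bounded, so $P_{q_k}(\alpha, x)$ is a fixed continuous nonvanishing function of $x$ on the bounded range of $x$, giving (i) directly. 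The main obstacle I anticipate is step (b): proving that the limiting rescaled product does not vanish on $I_r$ requires knowing that $I_r$ stays uniformly away from all $q_k$ of the sine zeros simultaneously, which in turn depends on the precise interplay between the bound $|\varepsilon_k(N)| < q_k\delta_k$, the bound $bq_k\delta_k < q_kq_{k+1}\delta_k/q_{k+1} < 1$, and the spacing $\delta_k$ of the points $n\alpha$; getting the constants to line up so that the forbidden neighborhoods of integers are genuinely avoided (this is where the Ostrowski carry rule $b_{k-1}=0$ when $b_k = a_{k+1}$ is essential) is the delicate part.
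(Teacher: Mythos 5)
Your first half is on the right track and matches the paper: you bound $\varepsilon_k(N)$ using the Ostrowski carry rule and the telescoping identity $a_{\ell+1}\delta_\ell=\delta_{\ell-1}-\delta_{\ell+1}$ (the paper gets the sharper two-sided bound $-q_k(\delta_k-\delta_{k+1})\le \varepsilon_k(N)\le q_k\delta_{k+1}$, hence $|\varepsilon_k(N)|\le(1-\kappa)q_k\delta_k$ with $\kappa$ from Lemma \ref{deltaklemma}), you build $I_r$ from the limits $q_k\delta_k\to C_rE_r$ coming from \eqref{recursionsolutions}, and you handle the finitely many $k<k_0$ by a fixed-$k$ argument. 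All of that is essentially the paper's proof of the containment $bq_k\delta_k+\varepsilon_k(N)\in I_{[k]}$ and of part (i) for small $k$.

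The genuine gap is in the uniform lower bound $P_{q_k}(\alpha,x)\gg 1$ on $I_{[k]}$. Your central mechanism --- the admissible $x$ stay at distance $\gg\delta_k$ from the zeros $x/q_k\equiv -n\alpha$, because the points $n\alpha$ are $\delta_k$-separated --- only bounds each \emph{individual} factor below by $\gg\delta_k\asymp q_k^{-1}$; a product of $q_k$ such factors could a priori be as small as $(c\delta_k)^{q_k}$, so separation from the zero set does not by itself give a bound uniform in $k$. Your fallback, namely that $P_{q_k}(\alpha,\cdot)$ converges to a limit function which is positive on $I_r$ ``because $I_r$ avoids the zero set,'' begs the question: in the paper the positivity of the limit functions $G_r(\alpha,\cdot)$ on $I_r$ is \emph{deduced from} this lemma (together with Theorem \ref{limittheorem}), and you give no independent argument that the infinite product defining $G_r$ is nonvanishing on $I_r$. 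What is actually needed, and what the paper does, is a cancellation estimate: write $x=bB_{[k]}+y$, set $z=(-1)^k(y+b(B_{[k]}-q_k\delta_k))/q_k$ so that $f(n\alpha+(-1)^kx/q_k)=f((n+bq_k)\alpha+z)$, compare the perturbed product with the unperturbed block $\prod_{n=bq_k+1}^{(b+1)q_k}f(n\alpha)$ via $\cos(\pi z)+\sin(\pi z)\cot(\pi(n+bq_k)\alpha)$, and control the resulting sums $\delta_k\left|\sum\cot\right|$ and $\delta_k^2\sum\cot^2$ by Lubinsky's cotangent-sum bound \eqref{cotangentsumirrational} (as in the proof of Proposition \ref{transferprinciple}), which shows the ratio is $e^{O(1)}$; then the block product itself equals $P_{(b+1)q_k}(\alpha)/P_{bq_k}(\alpha)\gg 1$ by Lubinsky's result that $1\ll P_N(\alpha)\ll 1$ when $N$ has $O(1)$ nonzero Ostrowski digits. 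These two inputs --- the cotangent-sum estimate controlling the multiplicative perturbation, and the lower bound for Sudler products at multiples of $q_k$ --- are exactly what is missing from your plan, and without them (or an independent proof that $G_r>0$ on $I_r$) the uniform lower bound in both (i) and (ii) is not established.
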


\begin{proof} Fix an integer $N \ge 0$ with Ostrowski expansion $\sum_{k=0}^{\infty} b_k q_k$, and integers $k \ge 0$ and $0 \le b <b_k$. We necessarily have $b_k>0$; in particular, $k \ge 1$, or $k=0$ and $a_1>1$. Observe that
\[ \begin{split} \varepsilon_k (N) = q_k \sum_{\ell =k+1}^{\infty} (-1)^{k+\ell} b_{\ell} \delta_{\ell} &\le q_k \left( a_{k+3} \delta_{k+2}+a_{k+5} \delta_{k+4} +\cdots \right) \\&= q_k \left( (\delta_{k+1}-\delta_{k+3} ) + (\delta_{k+3}-\delta_{k+5})+\cdots \right) \\ &= q_k \delta_{k+1} . \end{split} \]
Since $b_k>0$, by the extra rule of the Ostrowski expansions we have $b_{k+1}<a_{k+2}$. Therefore
\[ \begin{split} \varepsilon_k (N) = q_k \sum_{\ell =k+1}^{\infty} (-1)^{k+\ell} b_{\ell} \delta_{\ell} &\ge -q_k \left( (a_{k+2}-1) \delta_{k+1} + a_{k+4} \delta_{k+3} + \cdots  \right) \\ &= -q_k \left( -\delta_{k+1} + (\delta_k - \delta_{k+2}) + (\delta_{k+2}-\delta_{k+4}) + \cdots \right) \\ &=-q_k (\delta_k - \delta_{k+1}) . \end{split} \]
Letting $\kappa >0$ be as in Lemma \ref{deltaklemma}, we thus have $|\varepsilon_k (N)| \le (1-\kappa) q_k \delta_k$.

Consider now
\[ P_{q_k} (\alpha, bq_k \delta_k +\varepsilon_k (N)) = \prod_{n=1}^{q_k} f((n+bq_k)\alpha + (-1)^k \varepsilon_k (N)/q_k) . \]
For each $1 \le n \le q_k$ we have $n+bq_k \le a_{k+1}q_k<q_{k+1}$, and hence by the best approximation property of continued fractions,
\[ \left\| (n+bq_k)\alpha +(-1)^k \varepsilon_k (N) /q_k \right\| \ge \| q_k \alpha \| - |\varepsilon_k (N)|/q_k \ge \kappa \delta_k . \]
Consequently, $P_{q_k}(\alpha, bq_k \delta_k+\varepsilon_k (N)) \gg 1$ for any \textit{fixed} $k \ge 0$. It will thus be enough to prove Lemma \ref{intervallemma} (ii), and Lemma \ref{intervallemma} (i) will follow.

We now prove Lemma \ref{intervallemma} (ii). Observe that \eqref{recursionsolutions} implies $q_{s+mp+r} \delta_{s+mp+r} \to B_r$ as $m \to \infty$, where $B_r=C_r E_r>0$, $1 \le r \le p$, are constants. Define
\[ I_r = \left[ -(1-\kappa /2) B_r, (a_{s+r+1}-\kappa /2) B_r \right] \qquad (1 \le r \le p) . \]
These intervals, together with some constant $k_0$, to be chosen, satisfy the claim. Choosing $k_0$ large enough, for all $k \ge k_0$ and all $0 \le b<a_{k+1}$ we have $bq_k \delta_k +[-(1-\kappa )q_k \delta_k, (1-\kappa )q_k \delta_k] \subseteq I_{[k]}$. In particular, $bq_k \delta_k +\varepsilon_k (N) \in I_{[k]}$ for all $N \ge 0$, all $k \ge k_0$ and all $0 \le b<b_k$.

Now let $k \ge k_0$ and $x \in I_{[k]}$ be arbitrary, and let us prove a lower bound for $P_{q_k}(\alpha, x)$. Then $x=b B_{[k]}+y$ for some appropriate integer $0 \le b<a_{k+1}$, and some $|y| \le (1-\kappa /2)B_{[k]}$. Let
\[ z= \frac{(-1)^k (y+b(B_{[k]} - q_k \delta_k))}{q_k} , \]
and note that
\begin{equation}\label{zestimate}
|z| \le \frac{(1-\kappa /2)B_{[k]} + (a_{k+1}-1)|q_k \delta_k -B_{[k]}|}{q_k} \le (1-\kappa /4) \delta_k
\end{equation}
provided $k_0$ was chosen large enough. With this choice of $z$ we have 
$$
f (n \alpha + (-1)^k x/q_k) = f((n+bq_k)\alpha +z),
$$ 
and so
\begin{equation}\label{pqkfactorization}
\begin{split} \frac{P_{q_k}(\alpha, x)}{\prod_{n=bq_k+1}^{(b+1)q_k} f(n \alpha )} &= \frac{\prod_{n=1}^{q_k} f((n+bq_k)\alpha +z)}{\prod_{n=1}^{q_k} f((n+bq_k)\alpha )} \\ &= \left| \prod_{n=1}^{q_k} \left( \cos (\pi z) + \sin (\pi z) \cot (\pi (n+bq_k)\alpha ) \right) \right|, \end{split}
\end{equation}
where the last equation follows from standard trigonometric identities. Using $\| (n+bq_k) \alpha \| \ge \| q_k \alpha \|=\delta_k$ and \eqref{zestimate}, we obtain
\[ \begin{split} \left| \cos (\pi z)-1 + \sin (\pi z) \cot (\pi (n+bq_k)\alpha ) \right| &\le |\cos (\pi z)-1| + \frac{|\sin (\pi z)|}{\sin (\pi \delta_k )} \\ &\le \frac{\pi^2}{2} (1-\kappa /4)^2 \delta_k^2 + \frac{\pi (1-\kappa /4)\delta_k}{\pi \delta_k - \pi^3 \delta_k^3/6} \\ &\le 1-\kappa /8 \end{split} \]
provided $k_0$ was chosen large enough; the point is that each factor in \eqref{pqkfactorization} is bounded away from $0$. Following the steps in the proof of Proposition \ref{transferprinciple} (in particular, recalling the cotangent sum estimate \eqref{cotangentsumirrational}), we thus deduce that
\[ \begin{split} &\frac{P_{q_k}(\alpha, x)}{\prod_{n=bq_k+1}^{(b+1)q_k} f(n \alpha )} \\ &= \exp \left( O \left( 1+ \delta_k \left| \sum_{n=1}^{q_k} \cot (\pi (n+bq_k)\alpha ) \right| + \delta_k^2 \sum_{n=1}^{q_k} \cot^2 (\pi (n+bq_k)\alpha ) \right) \right) \\ &= \exp \left( O(1) \right) . \end{split} \]
On the other hand, a general result of Lubinsky \cite[Proposition 5.1]{LU} implies that $1 \ll P_N (\alpha) \ll 1$ whenever the Ostrowski expansion of $N$ contains $\ll 1$ nonzero terms. In particular,
\[ \prod_{n=bq_k+1}^{(b+1)q_k} f(n \alpha ) = \frac{P_{(b+1)q_k}(\alpha )}{P_{bq_k}(\alpha)} \gg 1, \]
and hence $P_{q_k} (\alpha, x) \gg 1$, as claimed.
\end{proof}

\subsection{The limit functions}

The perturbed Sudler products $P_{q_k}(\alpha, x)$ were shown to converge to an explicitly given limit function for the special irrationals $\alpha=[0;a,a,a,\dots]$ in \cite{ATZ}. A generalization to all quadratic irrationals has recently been announced by Grepstad, Technau and Zafeiropoulos \cite{GNZ}; see also \cite{GN} for a version without the perturbation variable $x$. In this paper we prove the locally uniform convergence of $P_{q_k}(\alpha, x)$ with an explicit rate. This explicit (in fact, exponential) rate is needed to derive the $O(\max \{ 1,1/c\})$ resp.\ $O(1)$ error terms in Theorem \ref{quadraticasymptotics}.

Let $\alpha=[a_0;a_1,\ldots, a_s, \overline{a_{s+1},\ldots, a_{s+p}}]$ be a quadratic irrational, and let $C_r, E_r$ be as in \eqref{recursionsolutions}. For any $1 \le r \le p$ let
\begin{equation}\label{Grdef}
G_r(\alpha, x)= 2 \pi |x+C_r E_r| \prod_{n=1}^{\infty} \left| \left( 1 - C_r E_r \frac{\{ n \alpha_r \} -\frac{1}{2}}{n} \right)^2 - \frac{\left( x+\frac{C_r E_r}{2} \right)^2}{n^2} \right| ,
\end{equation}
where
\[ \alpha_r = [0;\overline{a_{s+r+p}, \ldots, a_{s+r+2},a_{s+r+1}}] . \]

\begin{thm}\label{limittheorem} The infinite product in \eqref{Grdef} is locally uniformly convergent on $\mathbb{R}$. The function $G_r(\alpha , \cdot )$ is continuous on $\mathbb{R}$, and continuously differentiable on the open set $\{ x \in \mathbb{R} \, : \, G_r (\alpha, x) \neq 0 \}$. For any compact interval $I \subset \mathbb{R}$ and any integer $k \ge 1$,
\[ P_{q_k} (\alpha, x) = \left( 1 + O \left( q_k^{-1/2} \log^{3/4} q_k \right) \right) G_{[k]} (\alpha, x) + O \left( q_k^{-2} \right) , \qquad x \in I \]
with implied constants depending only on $I$ and $\alpha$. In particular, $P_{q_{s+mp+r}} (\alpha, \cdot ) \to G_r (\alpha, \cdot)$ locally uniformly on $\mathbb{R}$, as $m \to \infty$.
\end{thm}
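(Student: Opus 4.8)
The plan is to write $\log P_{q_k}(\alpha,x)$ as a sum over $n=1,\dots,q_k$ of $\log f(n\alpha+(-1)^k x/q_k)$, reorganize the residues $n\alpha \bmod 1$ using the three-distance structure / Ostrowski data of the period, and then recognize the resulting expression as a Riemann–sum approximation to the logarithm of the infinite product in \eqref{Grdef}. Concretely, for $k=s+mp+r$ the points $\{n\alpha\}$, $1\le n\le q_k$, are governed by the $(m{-}1)$-fold iterate of the matrix $T_r$ acting on the block of partial quotients; this is exactly the self-similarity that makes the limit exist. First I would fix $r$ and collect the standard facts: $q_k\delta_k\to B_r=C_rE_r$ with geometric rate $O(\eta^{-m})$ from \eqref{recursionsolutions}, and, crucially, that the index $n\mapsto \|n\alpha\|$ has a clean description — writing $n = \sum b_\ell q_\ell$ in Ostrowski form and using $\delta_\ell = E_{[\ell]}\eta^{-\lfloor\cdot\rfloor}$, one shows $\|n\alpha\| = |\,\text{(something like)}\ \delta_{k-1}\cdot(\text{bounded integer depending on }n)\,|$ up to controlled error. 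The point is that the ``heights'' $q_k\|n\alpha\|$ cluster near integers $j$ with multiplicity related to $\{n\alpha_r\}$ through an equidistribution/Denjoy–Koksma bound, which is what produces the factor $\bigl(1 - C_rE_r(\{n\alpha_r\}-\tfrac12)/n\bigr)^2 - (x+\tfrac{C_rE_r}{2})^2/n^2$ after pairing the $j$-th and $(-j)$-th nearest residues.

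After that reorganization, I would split the product into a ``bulk'' part $1\le n\le q_k$ with $\|n\alpha+(-1)^kx/q_k\|$ not too small — say heights $\le q_k^{1/2}$ — and a ``tail''. On the bulk, each factor is $1+O(q_k^{-1/2})$-close to the corresponding factor of $G_r$, and summing $\log$ of the discrepancy against a Koksma–Hlawka / Erdős–Turán estimate for the sequence $\{n\alpha_r\}$ (whose discrepancy is $\ll (\log q_k)/q_k$ since $\alpha_r$ is badly approximable) yields the claimed $O(q_k^{-1/2}\log^{3/4}q_k)$ relative error — the exponent $3/4$ coming from optimizing a bound of the shape $\sum$(deviation)$^2 \ll q_k^{-1}\log^{3/2}q_k$ and taking the square root after a Cauchy–Schwarz step against the number of ``dangerous'' indices $\ll q_k^{1/2}$. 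The tail, where $f(n\alpha+\cdots)$ can be as small as $\asymp 1/q_k$, is handled additively: it contributes an \emph{additive} term $O(q_k^{-2})$ to $P_{q_k}$ itself (not to its logarithm), which is exactly why the theorem is stated as a multiplicative approximation plus an additive $O(q_k^{-2})$ — this mirrors the case distinction already seen in Lemma \ref{intervallemma} and in \cite{ATZ}. Local uniform convergence of the infinite product in \eqref{Grdef}, continuity of $G_r(\alpha,\cdot)$, and $C^1$-smoothness away from its zero set then follow from the same discrepancy estimate: the general factor is $1+O(1/n^2)$ uniformly for $x$ in a compact set once $n$ exceeds a bound depending on $I$, so the product converges locally uniformly and inherits regularity from its finitely many ``bad'' factors, each of which is a nonnegative real-analytic function of $x$.

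The main obstacle I expect is the combinatorial bookkeeping in the first step: precisely matching the residues $\{n\alpha\}$, $1\le n\le q_k$, with the integer labels $n$ appearing in the infinite product, \emph{with a geometric error rate in $m$}, rather than the soft $o(1)$ that suffices for mere convergence. This requires tracking how the eigenvector decomposition of $T_r$ distributes the Ostrowski digits and controlling the accumulated error from replacing $q_k\delta_k$, $q_{k-1}\delta_{k-1}$, etc.\ by their limits $B_{[k]}$, $B_{[k-1]}$ — doable because all these converge geometrically by \eqref{recursionsolutions}, but delicate to make uniform in $x\in I$. The secondary difficulty is the sharp discrepancy input: one needs not just $D_{q_k}(\{n\alpha_r\})\ll (\log q_k)/q_k$ but a second-moment / Koksma-type bound on $\sum_n (\{n\alpha_r\}-\tfrac12)\cdot g(n/q_k)$ for suitable test functions $g$, which for a badly approximable $\alpha_r$ follows from Fourier expansion and the bound $\|q_k\alpha_r\|\gg 1/q_k$ on denominators, but must be assembled carefully to land the exponent $3/4$. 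Everything else — the trigonometric identity converting $f(n\alpha+(-1)^kx/q_k)/f(n\alpha)$ into $\cos(\pi z)+\sin(\pi z)\cot(\pi n\alpha)$ exactly as in \eqref{pqkfactorization}, Lubinsky's cotangent bound \eqref{cotangentsumirrational}, and summation by parts — is routine and parallels the proof of Proposition \ref{transferprinciple}.
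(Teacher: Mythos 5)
Your overall plan points in the right direction (factor-by-factor comparison with $G_{[k]}$, a bulk/tail split, discrepancy plus Koksma-type cancellation for $\{n\alpha_r\}$, and an additive error where factors can vanish), but the step you yourself flag as ``the main obstacle'' -- precisely matching the residues $\{n\alpha\}$, $1\le n\le q_k$, with the integer labels $n$ of the infinite product, with a quantitative error -- is exactly the content of the theorem, and your proposal does not supply a mechanism for it. The paper does this with a concrete and fairly short device that avoids all Ostrowski/three-distance bookkeeping: write $\alpha=p_k/q_k+(-1)^k\delta_k/q_k$, peel off the $n=q_k$ factor, observe that the remaining factors depend only on $n \bmod q_k$, reindex the product by the bijection $n\mapsto q_{k-1}n$ of nonzero residues (using $p_kq_{k-1}\equiv(-1)^{k+1} \pmod {q_k}$), normalize by $\prod_{n=1}^{q_k-1}f(n/q_k)=q_k$ from \eqref{lastterm}, and pair the $n$th with the $(q_k-n)$th factor via $f(u-v)f(u+v)=|f^2(u)-f^2(v)|$. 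This produces, exactly, factors of the form $\bigl|f^2\bigl(\tfrac{n}{q_k}-(\{\tfrac{nq_{k-1}}{q_k}\}-\tfrac12)\delta_k\bigr)-f^2\bigl(\tfrac{2x+q_k\delta_k}{2q_k}\bigr)\bigr|/f^2(n/q_k)$, from which the $n$th factor of $G_{[k]}$ emerges after replacing $q_k\delta_k$ by $C_{[k]}E_{[k]}$ and $q_{k-1}/q_k$ by $\alpha_{[k]}$ (both with $O(q_k^{-2})$ error from \eqref{recursionsolutions}). Relatedly, your proposed source of the exponent $3/4$ (a second-moment/Cauchy--Schwarz count of ``dangerous'' indices) is not what the estimate comes from and is not substantiated: in the paper the rate $q_k^{-1/2}\log^{3/4}q_k$ arises by cutting the product at $\psi(q_k)$, bounding the far range by $O(\log q_k/\psi(q_k))$ (discrepancy $\ll(\log N)/N$ of $\{nq_{k-1}/q_k\}$, Koksma's inequality, and partial summation against the monotone cotangent), bounding the Taylor-expansion error $f^2(t)=4\pi^2t^2(1+O(t^2))$ in the near range by $O(\psi(q_k)^3/q_k^2)$, and balancing at $\psi(t)=t^{1/2}\log^{1/4}t$.

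There is also an outright error in your last step: the general factor of \eqref{Grdef} is \emph{not} $1+O(1/n^2)$ uniformly on compacts; it is $1-2C_rE_r(\{n\alpha_r\}-\tfrac12)/n+O(1/n^2)$, i.e.\ only $1+O(1/n)$, so the infinite product is not absolutely convergent and its convergence cannot be inherited ``from finitely many bad factors''. One genuinely needs the cancellation $\bigl|\sum_{n\le N}(\{n\alpha_r\}-\tfrac12)\bigr|\ll\log N$ (valid because $\alpha_r$ is badly approximable) together with partial summation, as in the paper's Lemma \ref{cauchylemma}, to get $\prod_{n=N_1}^{N_2}(\cdots)=1+O(\log N_1/N_1)$ and hence locally uniform convergence by the Cauchy criterion; continuous differentiability off the zero set then requires checking uniform convergence of the term-by-term differentiated logarithmic series, not just a bound on the factors. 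Finally, your attribution of the additive $O(q_k^{-2})$ term is close in spirit but slightly off: in the paper it comes from the finitely many factors $1\le n\le N_0$ (those whose limits are the possibly vanishing factors of $G_r$, plus the middle factor when $q_k$ is even), where only an additive approximation is available because these factors are not bounded away from zero.
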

\noindent We postpone the proof to Section \ref{limitfunctionsection}. Note that the periodicity of the continued fraction expansion is crucial for such a limit relation; in particular, Theorem \ref{limittheorem} does not hold for all badly approximable $\alpha$.

Lemma \ref{intervallemma} implies that $G_r (\alpha, \cdot )>0$, and consequently that $\log G_r (\alpha, \cdot )$ is Lipschitz on the compact interval $I_r$. The main idea of the proof of Theorem \ref{quadraticasymptotics} is to replace the perturbed Sudler product $P_{q_k} (\alpha, bq_k \delta_k + \varepsilon_k (N))$ by its limit $G_{[k]}(\alpha, bq_k \delta_k + \varepsilon_k (N))$ in the claim of Lemma \ref{ostrowskilemma}. To this end, for any integer $N \ge 0$ with Ostrowski expansion $N=\sum_{k=0}^{\infty} b_k q_k$ let
\[ G_N (\alpha ) = \prod_{k=k_0}^{\infty} \prod_{b=0}^{b_k-1} G_{[k]} (\alpha, bq_k \delta_k + \varepsilon_k (N)), \]
where $k_0$ is the constant from the conclusion of Lemma \ref{intervallemma} (ii).
\begin{lem}\label{pngnlemma} For any real $c>0$ and any integer $\ell \ge 1$, we have
\[ \log \left( \sum_{0 \le N < q_{\ell}} P_N(\alpha )^c \right)^{1/c} = \log \left( \sum_{0 \le N<q_{\ell}} G_N (\alpha )^c \right)^{1/c} + O(1), \]
as well as
\[ \log \max_{0 \le N <q_{\ell}} P_N (\alpha ) = \log \max_{0 \le N <q_{\ell}} G_N (\alpha ) + O(1). \]
\end{lem}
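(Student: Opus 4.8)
The strategy is to compare the factorization of $P_N(\alpha)$ provided by Lemma \ref{ostrowskilemma} with the definition of $G_N(\alpha)$ scale by scale, establishing the pointwise bound
\[
\big| \log P_N(\alpha) - \log G_N(\alpha) \big| \ll 1 , \qquad \text{uniformly in } N \ge 0 .
\]
Granting this, both assertions of the lemma follow at once. For the maximum, combine $\log \max_{0 \le N < q_\ell} P_N(\alpha) = \max_{0 \le N < q_\ell} \log P_N(\alpha)$ with the elementary inequality $\big| \max_N f(N) - \max_N g(N) \big| \le \max_N \big| f(N) - g(N) \big|$. For the power sums, the pointwise bound says $c_1 G_N(\alpha) \le P_N(\alpha) \le c_2 G_N(\alpha)$ with constants $0 < c_1 \le c_2$ depending only on $\alpha$; raising to the power $c$, summing over $0 \le N < q_\ell$ and taking $c$-th roots yields $c_1 \big( \sum_N G_N(\alpha)^c \big)^{1/c} \le \big( \sum_N P_N(\alpha)^c \big)^{1/c} \le c_2 \big( \sum_N G_N(\alpha)^c \big)^{1/c}$, which is exactly the stated identity with an $O(1)$ error that is, in fact, independent of both $c$ and $\ell$.

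First I would split the ratio, using Lemma \ref{ostrowskilemma} and the definition of $G_N(\alpha)$, as
\[
\frac{P_N(\alpha)}{G_N(\alpha)} = \Bigg( \prod_{k=0}^{k_0-1} \prod_{b=0}^{b_k-1} P_{q_k}\big(\alpha, bq_k\delta_k + \varepsilon_k(N)\big) \Bigg) \prod_{k \ge k_0} \prod_{b=0}^{b_k-1} \frac{P_{q_k}\big(\alpha, bq_k\delta_k + \varepsilon_k(N)\big)}{G_{[k]}\big(\alpha, bq_k\delta_k + \varepsilon_k(N)\big)} ,
\]
and treat the two pieces separately. The first bracket is a product of at most $\sum_{0 \le k < k_0} a_{k+1} \ll 1$ factors; each is at most $2^{q_k} \le 2^{q_{k_0-1}} \ll 1$ by the trivial bound $P_{q_k}(\alpha, \cdot) \le 2^{q_k}$, and at least $\gg 1$ by Lemma \ref{intervallemma}(i), so its logarithm is $O(1)$, uniformly in $N$. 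For the remaining factors, Lemma \ref{intervallemma}(ii) places each argument $bq_k\delta_k + \varepsilon_k(N)$ in the compact interval $I_{[k]}$, and on every $I_r$ the limit function obeys $G_r(\alpha, \cdot) \gg 1$, being the locally uniform limit of perturbed products that are themselves $\gg 1$ there. Applying Theorem \ref{limittheorem} with $I = I_1 \cup \cdots \cup I_p$ and dividing through by $G_{[k]}(\alpha, \cdot) \gg 1$ gives
\[
\log \frac{P_{q_k}\big(\alpha, bq_k\delta_k + \varepsilon_k(N)\big)}{G_{[k]}\big(\alpha, bq_k\delta_k + \varepsilon_k(N)\big)} \ll q_k^{-1/2} \log^{3/4} q_k .
\]
Summing over the $\le a_{k+1} \ll 1$ admissible values of $b$ and over $k \ge k_0$, and using that $q_k$ grows geometrically so that $\sum_k q_k^{-1/2} \log^{3/4} q_k$ converges, the logarithm of the second product is $O(1)$ as well, uniformly in $N$. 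Adding the two contributions proves the pointwise bound.

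I expect essentially no serious obstacle here: the genuinely hard inputs — the explicit (exponential) convergence rate in Theorem \ref{limittheorem} and the uniform positivity in Lemma \ref{intervallemma} — have already been isolated and may be invoked. The only point that needs care is that every estimate must be uniform in $N$, and for the power-sum identity also in $c$; this is precisely why the proof relies on the geometric growth of $q_k$ to make the error series over the scales $k$ summable, and why the final comparison is kept in the two-sided multiplicative form $c_1 G_N(\alpha) \le P_N(\alpha) \le c_2 G_N(\alpha)$ rather than as a bound on a single ratio.
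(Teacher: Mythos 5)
Your argument is correct and follows essentially the same route as the paper: the same splitting of $P_N(\alpha)/G_N(\alpha)$ via Lemma \ref{ostrowskilemma}, with Lemma \ref{intervallemma}(i) handling the finitely many scales $k<k_0$, Lemma \ref{intervallemma}(ii) and Theorem \ref{limittheorem} giving the ratio $1+O(q_k^{-1/2}\log^{3/4}q_k)$ on the intervals $I_{[k]}$, and the geometric growth of $q_k$ making the error summable, hence $P_N(\alpha)\asymp G_N(\alpha)$ uniformly in $N$. The only cosmetic difference is that the paper introduces a threshold $k_1>k_0$ and treats the finitely many scales $k_0\le k<k_1$ by two-sided constant bounds before invoking the convergence rate, a detail your uniform bounds $P_{q_k}(\alpha,\cdot)\gg 1$ and $G_{[k]}(\alpha,\cdot)\asymp 1$ absorb implicitly.
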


\begin{proof} By Lemma \ref{ostrowskilemma}, for all $0 \le N <q_{\ell}$ with Ostrowski expansion $N=\sum_{k=0}^{\ell-1} b_k q_k$,
\[ \frac{P_N (\alpha )}{G_N (\alpha )} = \prod_{k=0}^{k_0-1} P_{q_k} (\alpha, bq_k \delta_k + \varepsilon_k (N)) \cdot \prod_{k=k_0}^{\ell-1} \prod_{b=0}^{b_k-1} \frac{P_{q_k}(\alpha, bq_k \delta_k + \varepsilon_k (N))}{G_{[k]}(\alpha, bq_k \delta_k + \varepsilon_k (N))} . \]
Lemma \ref{intervallemma} (i) shows that here $1 \ll \prod_{k=0}^{k_0-1} P_{q_k} (\alpha, bq_k \delta_k + \varepsilon_k (N)) \ll 1$. Since by Lemma \ref{intervallemma} (ii) we have $G_{[k]} (\alpha, b q_k \delta_k + \varepsilon_k (N)) \gg 1$, each factor $k_0 \le k \le \ell -1$ also stays between two positive constants. Finally, for a large enough constant $k_1>k_0$ Theorem \ref{limittheorem} gives
\[ \begin{split} \prod_{k=k_1}^{\ell-1} \prod_{b=0}^{b_k-1} \frac{P_{q_k}(\alpha, bq_k \delta_k + \varepsilon_k (N))}{G_{[k]}(\alpha, bq_k \delta_k + \varepsilon_k (N))} &= \prod_{k=k_1}^{\ell-1} \prod_{b=0}^{b_k-1} \left( 1 + O \left( q_k^{-1/2} \log^{3/4} q_k \right) \right) \\ &= \prod_{k=k_1}^{\infty} \left( 1 + O \left( q_k^{-1/2} \log^{3/4} q_k \right) \right) \in [1/2,2] . \end{split} \]
Hence $1 \ll P_N (\alpha ) / G_N (\alpha ) \ll 1$, and the claims follow.
\end{proof}

\subsection{Approximate additivity}

The final step is to prove that our sequences with $P_N(\alpha)$ replaced by $G_N(\alpha)$ are additive up to a small error; the proof of Theorem \ref{quadraticasymptotics} will then be immediate.
\begin{lem}\label{additivelemma} For any real $c>0$, the sequences
\[ c_m := \log \left( \sum_{0 \le N<q_{k_0+mp}} G_N (\alpha )^c \right)^{1/c} \qquad \textrm{and} \qquad c_m^* := \log \max_{0 \le N<q_{k_0+mp}} G_N (\alpha ) \]
satisfy $c_{m+n} = c_m+c_n+O(\max \{ 1,1/c \})$ and $c_{m+n}^* = c_m^*+c_n^*+O(1)$ for all $m,n \ge 1$.
\end{lem}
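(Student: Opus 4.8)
The plan is to establish the two approximate-additivity statements essentially simultaneously, since they both rest on the same mechanism: the Ostrowski expansion of an integer $N$ in the range $0 \le N < q_{k_0+(m+n)p}$ splits, up to a controlled boundary effect, into a ``low'' part below $q_{k_0+mp}$ and a ``high'' part, and the definition of $G_N(\alpha)$ as a product over the nonzero Ostrowski digits is compatible with such a splitting. Concretely, I would write $N = N' + N''$ where $N'' = \sum_{k \ge k_0+mp} b_k q_k$ is a multiple-of-$q_{k_0+mp}$-type piece and $N' = \sum_{k < k_0+mp} b_k q_k < q_{k_0+mp}$; the number of admissible high parts $N''$ is (up to $O(1)$ factors coming from the Ostrowski carry rule at the junction index $k = k_0+mp$) the same as the number of admissible integers $0 \le N_2 < q_{k_0+np}$, after shifting all indices down by $p$ and using periodicity of the continued fraction, so that $[k]$ is unchanged. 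The subtle point is that $\varepsilon_k(N)$ for $k < k_0+mp$ depends on \emph{all} the higher digits, including those of $N''$; but by Lemma~\ref{intervallemma} the quantity $bq_k\delta_k + \varepsilon_k(N)$ always lies in the fixed compact interval $I_{[k]}$ on which $\log G_{[k]}(\alpha,\cdot)$ is Lipschitz, and the variation of $\varepsilon_k(N)$ caused by changing the high digits is $\ll q_k\delta_{k_0+mp} \ll \eta^{-(k_0+mp-k)/1}$, i.e.\ geometrically small in the gap; summing these Lipschitz perturbations over $k$ and over the at most $a_{k+1}$ values of $b$ gives a total multiplicative distortion of $G_N(\alpha)$ that is bounded by an absolute constant, uniformly in $m,n,N$.

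With this combinatorial-geometric decomposition in hand, I would carry out the argument in the following order. First, fix $m,n \ge 1$ and set $K = k_0+mp$, $L = k_0+(m+n)p$. For each $0 \le N < q_L$ with Ostrowski expansion $\sum b_k q_k$, define $N' = \sum_{k<K} b_k q_k$ and let $N_2$ be the integer whose Ostrowski digits (with respect to $\alpha$, using periodicity) are $c_j := b_{K+j}$ for $j \ge 0$, so that $0 \le N_2 < q_{k_0+np}$; the map $N \mapsto (N', N_2)$ is a bijection between $\{0 \le N < q_L\}$ and the set of pairs satisfying one compatibility condition at the seam (namely $b_{K-1} = 0$ whenever $b_K = a_{K+1}$), and relaxing this condition changes cardinalities and the relevant sums by at most a bounded factor. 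Second, using the product formula for $G_N(\alpha)$ and the geometric-decay plus Lipschitz estimate described above, show
\[
\log G_N(\alpha) = \log G_{N'}(\alpha) + \log G_{N_2}(\alpha) + O(1),
\]
with an absolute implied constant. Third, exponentiate, raise to the power $c$, and sum over $N$: the $O(1)$ in the exponent becomes a factor $e^{O(c)}$, so after summing and taking $c$-th roots one gets $\big(\sum_N G_N(\alpha)^c\big)^{1/c} = e^{O(1)} \big(\sum_{N'} G_{N'}(\alpha)^c\big)^{1/c}\big(\sum_{N_2} G_{N_2}(\alpha)^c\big)^{1/c}$ up to the bounded-factor discrepancy from the seam; for the maximum one simply takes suprema. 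Taking logarithms yields $c_{m+n} = c_m + c_n + O(1)$ for the max-version and $c_{m+n} = c_m + c_n + O(\max\{1,1/c\})$ for the sum-version, where the $1/c$ appears exactly because the $O(1)$ additive error in $\log G_N$, after the $(\cdot)^{1/c}$, contributes $O(1/c)$ (while the $e^{O(c)} \to O(1)$ in log after the root is $O(1)$); taking the larger of the two accounts for both regimes $c$ small and $c$ large.

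The main obstacle I anticipate is bookkeeping the dependence of the low-index perturbation parameters $\varepsilon_k(N)$ on the high digits and confirming that the aggregate distortion is genuinely $O(1)$ rather than, say, $O(m)$ or $O(\log q_K)$. The key quantitative inputs are: (a) $|q_k\delta_k|$ is bounded and in fact converges to $B_{[k]}$ at a geometric rate (from \eqref{recursionsolutions}), so the perturbation in $\varepsilon_k(N)$ from altering digits at levels $\ge K$ is $\ll q_k \sum_{\ell \ge K} \delta_\ell \ll q_k \delta_{K} \ll \eta^{-(K-k)/p}\cdot(\text{const})$; (b) there are at most $a_{k+1}+1 \ll 1$ values of $b$ at each level $k$, and $\log G_{[k]}(\alpha,\cdot)$ is Lipschitz on $I_{[k]}$ with a constant uniform in $k$ (from Lemma~\ref{intervallemma}(ii) and Theorem~\ref{limittheorem}); hence the total is $\ll \sum_{k<K}(a_{k+1}+1)\,\eta^{-(K-k)/p} \ll 1$ because the geometric series converges. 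A secondary technical nuisance is the Ostrowski carry condition at the seam index $K$: one must check that allowing or forbidding $b_{K-1}=0$ when $b_K = a_{K+1}$ costs only a constant factor in the cardinalities and in the sums/maxima of $G_N(\alpha)^c$, which is immediate because removing this constraint at most doubles the index set and each $G_N(\alpha)$ stays between two positive constants times the ``product of the two pieces'' value by the estimate in step two. Everything else — the translation $N \mapsto (N',N_2)$, the use of periodicity so that residues $[k]$ match up after shifting by $p$, and the passage from multiplicative to additive (logarithmic) estimates — is routine once these two points are nailed down.
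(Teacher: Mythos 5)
Your treatment of the subadditive half is essentially the paper's argument: split the Ostrowski expansion at the seam $K=k_0+mp$, shift the high digits down by $mp$ using periodicity, and control the change of the arguments $bq_k\delta_k+\varepsilon_k(\cdot)$ via the geometric convergence of $q_k\delta_k$ from \eqref{recursionsolutions} together with the Lipschitz property of $\log G_{[k]}$ on $I_{[k]}$; since for that direction the injectivity of $N\mapsto (N',N_2)$ suffices, this does give $c_{m+n}\le c_m+c_n+O(1)$ and $c^*_{m+n}\le c^*_m+c^*_n+O(1)$. The genuine gap is in the superadditive half. Your map is a bijection onto the set of pairs satisfying the carry condition at the seam, so the pairs that violate it (top digit $b_{K-1}\neq 0$ in $N'$ together with a maximal bottom digit in the shifted high part) correspond to \emph{no} integer $N<q_{k_0+(m+n)p}$, and to bound $\bigl(\sum_{N'}G_{N'}^c\bigr)\bigl(\sum_{N_2}G_{N_2}^c\bigr)$, or the product of the two maxima, from above by the corresponding quantity over valid $N$ you must control the mass carried by these excluded pairs. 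Your justification --- that relaxing the constraint at most doubles the index set, combined with the factorization of step two --- does not do this: the factorization $\log G_N=\log G_{N'}+\log G_{N_2}+O(1)$ is only available for pairs that do come from a valid $N$, and a cardinality bound says nothing about weighted sums, nor about the maximum when the maximizing pair happens to be excluded. A repair is possible but needs an additional estimate you never state, e.g.\ that zeroing the top digit of $N'$ changes $G_{N'}$ only by a factor $e^{O(1)}$ (this uses that each $G_{[k]}$ is bounded \emph{above} as well as below on $I_{[k]}$, that the partial quotients are bounded, and another $\varepsilon_k$-perturbation/Lipschitz argument), so that every excluded pair can be rerouted with bounded multiplicity to an allowed pair of comparable weight.

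The paper sidesteps the seam entirely: given $N'<q_{k_0+mp}$ and $N''<q_{k_0+np}$ it concatenates their digit strings with a buffer block of $p$ zero digits in between, which automatically satisfies the carry rule, yielding $c_m+c_n\le c_{m+n+1}+O(1)$ (each $N$ arising with multiplicity $O(1)$); the extra index is then removed using the already-proved subadditivity together with $c_1=O(\max\{1,1/c\})$ and $c_1^*=O(1)$. Note that this last step is the actual source of the $\max\{1,1/c\}$ in the statement: bounded counting or multiplicity factors $M$ enter as $\tfrac1c\log M=O(1/c)$, whereas the $O(1)$ additive error in $\log G_N$ contributes only $O(1)$ after raising to the $c$-th power, summing, and taking $\tfrac1c\log$ --- not $O(1/c)$ as you assert. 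This misattribution is harmless for the final error term, but it signals that the place where the superadditivity argument really needs care is exactly the place your proposal treats as immediate.
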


\begin{proof} First, we prove that $c_m$ and $c_m^*$ are approximately subadditive. Let $0 \le N < q_{k_0+(m+n)p}$ be an integer with Ostrowski expansion $N=\sum_{k=0}^{k_0+(m+n)p-1} b_k q_k$. Consider the natural factorization
\begin{equation}\label{gnfactorization}
G_N (\alpha ) = \prod_{k=k_0}^{k_0+mp-1} \prod_{b=0}^{b_k-1} G_{[k]}(\alpha, b q_k \delta_k + \varepsilon_k (N)) \prod_{k=k_0+mp}^{k_0+(m+n)p-1} \prod_{b=0}^{b_k-1} G_{[k]}(\alpha, b q_k \delta_k + \varepsilon_k (N)) .
\end{equation}
Let us write $N=N_1+N_2$ with $N_1=\sum_{k=0}^{k_0+mp-1} b_k q_k$ and $N_2=\sum_{k=k_0+mp}^{k_0+(m+n)p-1} b_k q_k$. The plan of the proof is to show that, making a small error, we can replace $\varepsilon_k (N)$ in \eqref{gnfactorization} by $\varepsilon_k (N_1)$ and $\varepsilon_k (N_2)$, respectively, so that $G_N \approx G_{N_1}G_{N_2}$. Then we will show that we can replace $N_2$ by a number $N_2^*$ having the ``shifted'' Ostrowski representation $N_2^*=\sum_{k=k_0}^{k_0+np-1} b_{k+mp} q_k$, and obtain $G_N \approx G_{N_1} G_{N_2^*}$. This approximate shift-invariance is crucial for the argument, and comes from the periodicity of the continued fraction expansion of $\alpha$. From $G_N \approx G_{N_1} G_{N_2^*}$ we can deduce that $c_{m+n} \approx c_m + c_n$ and $c_{m+n}^* \approx c_m^* + c_n^*$, which is what we want to prove.

Now we make this precise. Note that for any $k_0 \le k \le k_0+mp-1$,
\[ \begin{split} \left| \varepsilon_k (N) - \varepsilon_k (N_1) \right| &= \left| q_k \sum_{\ell =k+1}^{k_0+(m+n)p-1} (-1)^{k+\ell} b_{\ell} \delta_{\ell} - q_k \sum_{\ell =k+1}^{k_0+mp-1} (-1)^{k+\ell} b_{\ell} \delta_{\ell} \right| \\ &\ll q_k \sum_{\ell =k_0+mp}^{k_0+(m+n)p-1} \delta_{\ell} \\ &\ll q_k \delta_{k_0+mp} .  \end{split} \]
Since $\log G_r$ is Lipschitz on $I_r$, the previous estimate implies that the first factor in \eqref{gnfactorization} is
\begin{equation}\label{gn1}
\begin{split} \prod_{k=k_0}^{k_0+mp-1} \prod_{b=0}^{b_k-1} &G_{[k]}(\alpha, b q_k \delta_k + \varepsilon_k (N)) \\ &= \prod_{k=k_0}^{k_0+mp-1} \prod_{b=0}^{b_k-1} G_{[k]}(\alpha, b q_k \delta_k + \varepsilon_k (N_1)) e^{O(q_k \delta_{k_0+mp})} \\ &= e^{O(q_{k_0+mp} \delta_{k_0+mp})} \prod_{k=k_0}^{k_0+mp-1} \prod_{b=0}^{b_k-1} G_{[k]}(\alpha, b q_k \delta_k + \varepsilon_k (N_1)) \\ &= e^{O(1)} G_{N_1}(\alpha ) . \end{split}
\end{equation}

Now let $N_2^*=\sum_{k=k_0}^{k_0+np-1} b_{k+mp}q_k$; observe that this is a valid Ostrowski expansion of $0 \le N_2^* < q_{k_0+np}$. Using \eqref{recursionsolutions}, for any $k_0+mp \le k \le k_0+(m+n)p-1$ and any $0 \le b < b_k$,
\[ \left| b q_k \delta_k - b q_{k-mp} \delta_{k-mp} \right| \ll \eta^{2m-2k/p} .  \]
Similarly,
\[ \begin{split} |\varepsilon_k (N) &- \varepsilon_{k-mp} (N_2^*)| \\ &= \left| q_k \sum_{\ell =k+1}^{k_0+(m+n)p-1} (-1)^{k+\ell} b_{\ell} \delta_{\ell} - q_{k-mp} \sum_{\ell =k-mp+1}^{k_0+np-1} (-1)^{k-mp+\ell} b_{\ell +mp} \delta_{\ell} \right| \\ &=\left| \sum_{\ell =k+1}^{k_0+(m+n)p-1} (-1)^{k+\ell} b_{\ell} \left( q_k \delta_{\ell} - q_{k-mp} \delta_{\ell -mp} \right) \right| \\ &\ll \sum_{\ell =k+1}^{k_0+(m+n)p-1} \eta^{2m-k/p-\ell /p} \\ &\ll \eta^{2m-2k/p} . \end{split} \]
Therefore the second factor in \eqref{gnfactorization} is
\begin{equation}\label{gn2}
\begin{split} \prod_{k=k_0+mp}^{k_0+(m+n)p-1} \prod_{b=0}^{b_k-1} &G_{[k]}(\alpha, b q_k \delta_k + \varepsilon_k (N)) \\ &= \prod_{k=k_0+mp}^{k_0+(m+n)p-1} \prod_{b=0}^{b_k-1} G_{[k]}(\alpha, b q_{k-mp} \delta_{k-mp} + \varepsilon_{k-mp} (N_2^*)) e^{O(\eta^{2m-2k/p})} \\ &=e^{O(1)} \prod_{k=k_0}^{k_0+np-1} \prod_{b=0}^{b_{k+mp}-1} G_{[k]} (\alpha, b q_k \delta_k +\varepsilon_k (N_2^*) ) \\ &= e^{O(1)} G_{N_2^*} (\alpha ) . \end{split}
\end{equation}

From \eqref{gnfactorization}, \eqref{gn1} and \eqref{gn2} we finally obtain the approximate factorization $G_N(\alpha) = G_{N_1}(\alpha) G_{N_2^*} (\alpha) e^{O(1)}$. As $N$ runs in the interval $0 \le N < q_{k_0+(m+n)p}$, we obtain each pair $(N_1, N_2^*) \in [0,q_{k_0+mp}) \times [0,q_{k_0+np})$ at most once by the uniqueness of Ostrowski expansions. Therefore
\[ \begin{split} \sum_{0 \le N <q_{k_0+(m+n)p}} G_N (\alpha )^c &\le \left( \sum_{0 \le N<q_{k_0+mp}} G_N (\alpha )^c \right) \left( \sum_{0 \le N<q_{k_0+np}} G_N (\alpha )^c \right) e^{O(c)}, \\ \max_{0 \le N <q_{k_0+(m+n)p}} G_N (\alpha ) &\le \left( \max_{0 \le N<q_{k_0+mp}} G_N (\alpha ) \right) \left( \max_{0 \le N<q_{k_0+np}} G_N (\alpha ) \right) e^{O(1)}, \end{split} \]
and the approximate subadditivity $c_{m+n} \le c_m+c_n +O(1)$ and $c_{m+n}^* \le c_m^*+c_n^*+O(1)$ follows.

The proof of approximate superadditivity is entirely analogous. Let $0 \le N' < q_{k_0+mp}$ and $0 \le N'' < q_{k_0+np}$ be integers with Ostrowski expansions $N'=\sum_{k=0}^{k_0+mp-1} b_k' q_k$ and $N''=\sum_{k=0}^{k_0+np-1} b_k'' q_k$. Define $N=\sum_{k=0}^{k_0+(m+n+1)p-1} b_k q_k$, where
\[ b_k= \left\{ \begin{array}{ll} b_k' & \textrm{if } 0 \le k \le k_0 +mp-1, \\ 0 & \textrm{if } k_0+mp \le k \le k_0 +(m+1)p-1, \\ b_{k-(m+1)p}'' & \textrm{if } k_0+(m+1)p \le k \le k_0+(m+n+1)p-1 . \end{array} \right. \]
Note that this is a valid Ostrowski expansion of an integer $0 \le N < q_{k_0+(m+n+1)p}$; the block of zeroes in the middle ensures that the extra rule ($b_{k-1}=0$ whenever $b_k=a_{k+1}$) is satisfied.

Repeating the arguments from above, we deduce the approximate factorization $G_N(\alpha) =G_{N'}(\alpha) G_{N''}(\alpha)e^{O(1)}$. Observe that as $(N',N'') \in [0,q_{k_0+mp}) \times [0,q_{k_0+np})$, we obtain each integer $N \in [0,q_{k_0+(m+n+1)p})$ at most $O(1)$ times; indeed, from the value of $N$ one can recover all Ostrowski digits of $N'$ and $N''$ except for $b_k''$, $0 \le k \le k_0-1$. Therefore
\[ \begin{split} \left( \sum_{0 \le N<q_{k_0+mp}} G_N (\alpha )^c \right) \left( \sum_{0 \le N<q_{k_0+np}} G_N (\alpha )^c \right) &\le \sum_{0 \le 
N <q_{k_0+(m+n+1)p}} G_N (\alpha )^c e^{O(c)}, \\ \left( \max_{0 \le N<q_{k_0+mp}} G_N (\alpha ) \right) \left( \max_{0 \le N<q_{k_0+np}} G_N (\alpha ) \right) &\le \max_{0 \le N <q_{k_0+(m+n+1)p}} G_N (\alpha ) e^{O(1)}, \end{split} \]
and we get $c_m+c_n \le c_{m+n+1}+O(1)$ and $c_m^*+c_n^* \le c_{m+n+1}^*+O(1)$. By the approximate subadditivity proved above, here
\[ \begin{split} c_{m+n+1} &\le c_{m+n}+c_1+O(1) = c_{m+n}+O(\max \{ 1,1/c \} ), \\ c_{m+n+1}^* &\le c_{m+n}^*+c_1^*+O(1)=c_{m+n}^*+O(1). \end{split} \] 
Hence
\[ \begin{split} c_m+c_n-O(\max \{ 1,1/c \}) &\le c_{m+n} \le c_m+c_n+O(1), \\ c_m^*+c_n^*-O(1) &\le c_{m+n}^* \le c_m^*+c_n^*+O(1), \end{split} \]
as claimed.
\end{proof}

\begin{proof}[Proof of Theorem \ref{quadraticasymptotics}] According to Lemma \ref{additivelemma}, the sequence $c_m+L$ is subadditive, and the sequence $c_m-L$ is superadditive with some constant $L=O(\max \{ 1,1/c \})$. Fekete's subadditive lemma thus shows that $c_m/m$ converges, and its limit is
\[ p K_c(\alpha ) :=\lim_{m \to \infty} \frac{c_m}{m} = \inf_{m \ge 1} \frac{c_m+L}{m} = \sup_{m \ge 1} \frac{c_m-L}{m} . \]
The previous relations also yield the rate of convergence $c_m=K_c(\alpha ) mp+O(\max \{ 1 , 1/c\})$. Since $K_c (\alpha ) = O (\max \{ 1,1/c \})$ (see \eqref{KcKinftybounds}), we have
\[ \log \left( \sum_{0 \le N < q_k} G_N (\alpha )^c \right)^{1/c} = K_c (\alpha ) k + O(\max \{ 1,1/c \} ). \]
Lemma \ref{pngnlemma} and Proposition \ref{transferprinciple} show that here $G_N (\alpha )$ can be replaced by $P_N(\alpha )$ and also by $P_N (p_k/q_k )$. Hence
\[ \log \left( \sum_{0 \le N < q_k} P_N (p_k/q_k)^c \right)^{1/c} = K_c (\alpha ) k + O(\max \{ 1,1/c \} ) , \]
as claimed. An identical proof gives
\[ \log \max_{0 \le N < q_k} P_N (p_k/q_k) = K_{\infty} (\alpha ) k + O(1). \]
It follows from \eqref{triviallowerbound} and \eqref{KcKinftybounds} that the constants $K_c (\alpha)$ and $K_{\infty} (\alpha)$ are positive.
\end{proof}

\section{Locally uniform convergence of perturbed Sudler products}\label{limitfunctionsection}

\begin{proof}[Proof of Theorem \ref{limittheorem}] Fix a compact interval $I \subseteq \mathbb{R}$. Throughout the proof, constants and implied constants depend only on $I$ and $\alpha$. For the sake of readability, we continue to write $f(x)=|2 \sin (\pi x)|$.

We start by peeling off the last factor in $P_{q_k}(\alpha ,x)$ and using $\alpha = p_k/q_k+(-1)^k \delta_k/q_k$ to get
\[ \begin{split} P_{q_k}(\alpha, x) &= f (q_k \alpha + (-1)^k x/q_k) \prod_{n=1}^{q_k-1} f \left( \frac{np_k}{q_k} + (-1)^k \frac{n\delta_k +x}{q_k} \right) \\ &= f(\delta_k +x/q_k) \prod_{n=1}^{q_k-1} f \left( \frac{np_k}{q_k} + (-1)^k \left( \left\{ \frac{n}{q_k} \right\} -\frac{1}{2} \right) \delta_k + (-1)^k \frac{2x+q_k\delta_k}{2q_k} \right) . \end{split} \]
The factors in the previous line depend only on the remainder of $n$ modulo $q_k$. The general identity $q_k p_{k-1}-p_k q_{k-1}=(-1)^k$ in the theory of continued fractions shows that $q_k$ and $q_{k-1}$ are relatively prime, and $p_k q_{k-1} \equiv (-1)^{k+1} \pmod{q_k}$. Reordering the product via the bijection $n \mapsto q_{k-1}n$ of the set of nonzero residues modulo $q_k$, and using the identity \eqref{lastterm}, we obtain
\[ P_{q_k}(\alpha, x) = f( \delta_k +x/q_k ) q_k \prod_{n=1}^{q_k-1} \frac{f \left( \frac{n}{q_k} - \left( \left\{ \frac{nq_{k-1}}{q_k} \right\} - \frac{1}{2} \right) \delta_k - \frac{2x+q_k \delta_k}{2q_k} \right)}{f(n/q_k)} . \]
Let us combine the $n$th and the $(q_k-n)$th factors in the product via the trigonometric identity $f(u-v)f(u+v)=|f^2(u)-f^2(v)|$. If $q_k$ is even, then the $n=q_k/2$ factor is $1+O(q_k^{-2})$, thus
\begin{equation}\label{pqkproduct}
\begin{split} P_{q_k}(\alpha ,x) = &(1+O(q_k^{-2})) f(\delta_k +x/q_k)q_k \\ &\times \prod_{0<n<q_k/2} \frac{\left| f^2 \left( \frac{n}{q_k} - \left( \left\{ \frac{nq_{k-1}}{q_k} \right\} - \frac{1}{2} \right) \delta_k \right) - f^2 \left( \frac{2x+q_k \delta_k}{2q_k} \right) \right|}{f^2 \left( \frac{n}{q_k} \right)} . \end{split}
\end{equation}

Let $\log t \ll \psi (t) =o(t)$ be a function, to be chosen. We now show that the factors $\psi(q_k) \le n < q_k/2$ in \eqref{pqkproduct} have negligible contribution. Simple trigonometric identities and estimates give
\[ \begin{split} &\frac{\left| f^2 \left( \frac{n}{q_k} - \left( \left\{ \frac{nq_{k-1}}{q_k} \right\} - \frac{1}{2} \right) \delta_k \right) - f^2 \left( \frac{2x+q_k \delta_k}{2q_k} \right) \right|}{f^2 \left( \frac{n}{q_k} \right)} \\ &\hspace{20mm}= \frac{f^2 \left( \frac{n}{q_k} - \left( \left\{ \frac{nq_{k-1}}{q_k} \right\} - \frac{1}{2} \right) \delta_k \right)}{f^2 \left( \frac{n}{q_k} \right)} + O \left( \frac{1}{n^2} \right) \\ &\hspace{20mm} = 1 - \sin \left( 2 \pi \left( \left\{ \frac{nq_{k-1}}{q_k} \right\} - \frac{1}{2} \right) \delta_k \right) \cot \left( \pi \frac{n}{q_k} \right) + O \left( \frac{1}{n^2} \right) .  \end{split} \]
In particular, each factor $\psi (q_k) \le n < q_k/2$ is $1+O(1/n)$, and thus
\[ \begin{split} &\prod_{\psi (q_k) \le n < q_k/2} \frac{\left| f^2 \left( \frac{n}{q_k} - \left( \left\{ \frac{nq_{k-1}}{q_k} \right\} - \frac{1}{2} \right) \delta_k \right) - f^2 \left( \frac{2x+q_k \delta_k}{2q_k} \right) \right|}{f^2 \left( \frac{n}{q_k} \right)} \\ &= \exp \left( O \left( \left| \sum_{\psi (q_k) \le n < q_k/2} \sin \left( 2 \pi \left( \left\{ \frac{nq_{k-1}}{q_k} \right\} - \frac{1}{2} \right) \delta_k \right) \cot \left( \pi \frac{n}{q_k} \right)  \right| + \frac{1}{\psi (q_k)} \right) \right) . \end{split} \]
Since the partial quotients of $q_{k-1}/q_k=[0;a_k,a_{k-1}, \dots, a_1]$ are bounded by a constant depending only on $\alpha$, by a classical estimate \cite[pp.\ 125]{KN} the discrepancy of the sequence $\{ n q_{k-1}/q_k \}$, $1 \le n \le N$ is $\ll (\log N)/N$ provided that\footnote{Such discrepancy estimates are usually stated for irrational numbers, but the same proof works for rational numbers as well provided that $N$ is less than the denominator.} $N<q_k$. Applying Koksma's inequality \cite[pp.\ 143]{KN} to the mean zero, $1$-periodic function $\sin (2 \pi (\{ t \}-1/2)\delta_k)$ of total variation $\ll q_k^{-1}$, we thus deduce
\[ \left| \sum_{n=1}^N \sin \left( 2 \pi \left( \left\{ \frac{nq_{k-1}}{q_k} \right\} - \frac{1}{2} \right) \delta_k \right) \right| \ll \frac{\log N}{q_k} \qquad (1 \le N <q_k/2) . \]
Observe also that $\cot (\pi n /q_k)$ is monotone in $0<n<q_k/2$. Summation by parts yields
\[ \left| \sum_{\psi (q_k) \le n < q_k/2} \sin \left( 2 \pi \left( \left\{ \frac{nq_{k-1}}{q_k} \right\} - \frac{1}{2} \right) \delta_k \right) \cot \left( \pi \frac{n}{q_k} \right)  \right| \ll \frac{\log q_k}{\psi (q_k)} , \]
hence
\begin{equation}\label{psiqk<n<qk/2}
\prod_{\psi (q_k) \le n < q_k/2} \frac{\left| f^2 \left( \frac{n}{q_k} - \left( \left\{ \frac{nq_{k-1}}{q_k} \right\} - \frac{1}{2} \right) \delta_k \right) - f^2 \left( \frac{2x+q_k \delta_k}{2q_k} \right) \right|}{f^2 \left( \frac{n}{q_k} \right)} = 1+O \left( \frac{\log q_k}{\psi (q_k)} \right) .
\end{equation}

Next, fix a large constant $N_0>0$, and consider the factors $N_0<n <\psi(q_k)$ in \eqref{pqkproduct}. Applying the estimate $f^2(t)=4 \pi^2 t^2 (1+O(t^2))$ in all three terms introduces a multiplicative error of $\prod_{N_0<n<\psi (q_k)} \left( 1+O(n^2/q_k^2) \right) = 1+O(\psi(q_k)^3/q_k^2)$, and we get
\[ \begin{split} &\prod_{N_0<n<\psi (q_k)} \frac{\left| f^2 \left( \frac{n}{q_k} - \left( \left\{ \frac{nq_{k-1}}{q_k} \right\} - \frac{1}{2} \right) \delta_k \right) - f^2 \left( \frac{2x+q_k \delta_k}{2q_k} \right) \right|}{f^2 \left( \frac{n}{q_k} \right)} \\ &= \left( 1 + O \left( \frac{\psi(q_k)^3}{q_k^2} \right) \right) \prod_{N_0<n<\psi(q_k)} \left| \left( 1-q_k \delta_k \frac{\left\{ \frac{nq_{k-1}}{q_k} \right\} - \frac{1}{2}}{n} \right)^2 - \frac{\left( x+ \frac{q_k \delta_k}{2} \right)^2}{n^2} \right| . \end{split} \]
Choosing $N_0$ large enough, we can ensure that each factor in the previous product stays in, say, $[1/2,2]$. By the explicit formula \eqref{recursionsolutions} for $q_k$ and $\delta_k$, we have $q_k \delta_k=C_{[k]}E_{[k]}+O(q_k^{-2})$. Hence replacing $q_k \delta_k$ by $C_{[k]}E_{[k]}$ in the previous formula introduces a negligible multiplicative error of $\prod_{N_0<n<\psi (q_k)} (1+O(q_k^{-2}n^{-1}))=1+O(q_k^{-2}\log \psi (q_k))$.

We also wish to replace $q_{k-1}/q_k=[0;a_k,a_{k-1},\dots, a_1]$ by its limit $\alpha_{[k]}$; recall that $\alpha_r=[0;\overline{a_{s+r+p}, \dots, a_{s+r+2},a_{s+r+1}}]$. By the explicit formula \eqref{recursionsolutions}, we have $q_{k-1}/q_k=\alpha_{[k]}+O(q_k^{-2})$ (in particular, $\alpha_r=C_{r-1}/C_r$, $1 \le r \le p$ with the convention $C_0=C_p/\eta$). Observe that the function $(\{ nt \}-1/2)/n$ consists of linear segments of slope 1, with jumps at the points $j/n$, $j \in \mathbb{Z}$. Since $q_{k-1}/q_k$ has bounded partial quotients, for all $N_0<n<\psi (q_k)$ and all $j \in \mathbb{Z}$ we have
\[ \left| \frac{q_{k-1}}{q_k} - \frac{j}{n} \right| \ge \frac{\| n q_{k-1}/q_k \|}{n} \gg \frac{1}{n^2} , \]
hence there is no jump between $q_{k-1}/q_k$ and $\alpha_{[k]}$. Therefore
\[ \left| \frac{\left\{ \frac{n q_{k-1}}{q_k} \right\} - \frac{1}{2}}{n} - \frac{\left\{ n \alpha_{[k]} \right\}\ - \frac{1}{2}}{n} \right| \ll \frac{1}{q_k^2} , \]
so replacing $q_{k-1}/q_k$ by $\alpha_{[k]}$ introduces a negligible multiplicative error of $1+O(q_k^{-2} \psi (q_k))$. We have thus deduced
\begin{equation}\label{N0<n<psiqk}
\begin{split} &\prod_{N_0<n<\psi (q_k)} \frac{\left| f^2 \left( \frac{n}{q_k} - \left( \left\{ \frac{nq_{k-1}}{q_k} \right\} - \frac{1}{2} \right) \delta_k \right) - f^2 \left( \frac{2x+q_k \delta_k}{2q_k} \right) \right|}{f^2 \left( \frac{n}{q_k} \right)} \\ &= \left( 1 + O \left( \frac{\psi(q_k)^3}{q_k^2} \right) \right) \prod_{N_0<n<\psi(q_k)} \left| \left( 1-C_{[k]} E_{[k]} \frac{\{ n \alpha_{[k]} \} - \frac{1}{2}}{n} \right)^2 - \frac{\left( x+ \frac{C_{[k]} E_{[k]}}{2} \right)^2}{n^2} \right| . \end{split}
\end{equation}

\begin{lem}\label{cauchylemma} For any integers $N_0<N_1 \le N_2$ and any $1 \le r \le p$,
\[ \prod_{n=N_1}^{N_2} \left| \left( 1-C_r E_r \frac{\{ n \alpha_r \} - \frac{1}{2}}{n} \right)^2 - \frac{\left( x+ \frac{C_r E_r}{2} \right)^2}{n^2} \right| = 1+O \left( \frac{\log N_1}{N_1} \right) . \]
\end{lem}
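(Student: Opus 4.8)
The plan is to take logarithms and reduce the claim to a cancellation estimate for a Bernoulli-type sum. Write $c=C_r E_r>0$ and $v=x+c/2$; note that $v=O(1)$ uniformly for $x$ in the compact interval under consideration. Denote the $n$-th factor in the product by $F_n(x)$. Since $\{n\alpha_r\}-\tfrac12\in[-\tfrac12,\tfrac12)$ and $v$ is bounded, a direct expansion gives
\[ F_n(x)=\left(1-c\,\frac{\{n\alpha_r\}-\tfrac12}{n}\right)^2-\frac{v^2}{n^2}=1-2c\,\frac{\{n\alpha_r\}-\tfrac12}{n}+O\!\left(\frac{1}{n^2}\right) \]
uniformly in $x$. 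Provided $N_0$ was chosen large enough — which we may assume, exactly as in the derivation of \eqref{N0<n<psiqk} — every $F_n(x)$ with $n>N_0$ lies in $[\tfrac12,2]$; in particular it is positive, so the absolute value may be dropped and $\log F_n(x)$ is well defined, with
\[ \log F_n(x)=-2c\,\frac{\{n\alpha_r\}-\tfrac12}{n}+O\!\left(\frac{1}{n^2}\right),\qquad n>N_0, \]
still uniformly in $x$.

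Next I would sum this over $N_1\le n\le N_2$. The error terms contribute $\sum_{n\ge N_1}O(n^{-2})=O(N_1^{-1})$, which is harmless. The main term requires the bound
\[ \left|\sum_{n=N_1}^{N_2}\frac{\{n\alpha_r\}-\tfrac12}{n}\right|\ll\frac{\log N_1}{N_1}. \]
To obtain this, recall that $\alpha_r$ is a quadratic irrational, hence has bounded partial quotients; by the classical discrepancy estimate \cite[pp.\ 125]{KN} the finite sequence $\{n\alpha_r\}$, $1\le n\le N$, has star-discrepancy $\ll(\log N)/N$, so Koksma's inequality \cite[pp.\ 143]{KN} applied to the first Bernoulli function $t\mapsto\{t\}-\tfrac12$ (which has bounded variation) yields $|S(N)|\ll\log N$, where $S(N)=\sum_{m=1}^{N}(\{m\alpha_r\}-\tfrac12)$. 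Summation by parts with the monotone weights $1/n$, together with the facts that $t\mapsto(\log t)/t$ is decreasing for $t>e$ (so $(\log N_2)/N_2\le(\log N_1)/N_1$) and $\sum_{n\ge N_1}(\log n)/n^2\ll(\log N_1)/N_1$, then gives the displayed bound.

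Putting the pieces together, $\sum_{n=N_1}^{N_2}\log F_n(x)=O((\log N_1)/N_1)$ uniformly in $x$, and after enlarging $N_0$ so that $(\log N_1)/N_1\le1$ for all $N_1>N_0$, exponentiating gives $\prod_{n=N_1}^{N_2}F_n(x)=\exp\!\big(O((\log N_1)/N_1)\big)=1+O((\log N_1)/N_1)$, which is the claim. The only real obstacle is the cancellation estimate for $\sum\frac{\{n\alpha_r\}-1/2}{n}$ — one must exploit the equidistribution of $n\alpha_r$ to win the extra logarithmic factor over the trivial bound $O(\log N_2/N_1)$ — but this is classical and, in fact, relies on exactly the same discrepancy-plus-Koksma machinery already used a few lines above in the present proof of Theorem~\ref{limittheorem}.
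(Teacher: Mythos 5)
Your proposal is correct and follows essentially the same route as the paper's proof: bounded partial quotients of $\alpha_r$ give the discrepancy bound, Koksma's inequality yields $\left|\sum_{n=1}^N (\{n\alpha_r\}-\tfrac12)\right| \ll \log N$, and summation by parts gives $\left|\sum_{n=N_1}^{N_2} \frac{\{n\alpha_r\}-1/2}{n}\right| \ll \frac{\log N_1}{N_1}$, from which the product estimate follows. The only difference is that you spell out the step the paper leaves implicit, namely expanding each factor as $1-2C_rE_r\frac{\{n\alpha_r\}-1/2}{n}+O(n^{-2})$ and taking logarithms, which is exactly how the claim is meant to follow.
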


\begin{proof} Choosing the constant $N_0$ large enough, each factor stays in, say, $[1/2,2]$. Since the partial quotients of $\alpha_r$ are bounded by a constant depending only on $\alpha$, the same discrepancy estimate and Koksma's inequality yield $\left| \sum_{n=1}^N (\{ n \alpha_r \} -1/2) \right| \ll \log N$ for all $N \ge 1$. Applying summation by parts, we deduce
\[ \left| \sum_{n=N_1}^{N_2} \frac{\{ n \alpha_r \} -\frac{1}{2}}{n} \right| \ll \frac{\log N_1}{N_1}, \]
and the claim of Lemma \ref{cauchylemma} follows.
\end{proof}

Lemma \ref{cauchylemma} immediately implies via the Cauchy criterion that the infinite product
\begin{equation}\label{infiniteproduct}
\prod_{n=N_0+1}^{\infty} \left| \left( 1-C_r E_r \frac{\{ n \alpha_r \} - \frac{1}{2}}{n} \right)^2 - \frac{\left( x+ \frac{C_r E_r}{2} \right)^2}{n^2} \right|
 \end{equation}
is uniformly convergent on $I$, and its limit is positive. Its logarithm is given by a uniformly convergent series; the series of term-by-term derivatives
\[ \sum_{n=N_0+1}^{\infty} \left( \left( 1-C_r E_r \frac{\{ n \alpha_r \} - \frac{1}{2}}{n} \right)^2 - \frac{\left( x+ \frac{C_r E_r}{2} \right)^2}{n^2} \right)^{-1} \frac{-2x-C_r E_r}{n^2} \]
is also seen to be uniformly convergent on $I$. Therefore the logarithm of the infinite product in \eqref{infiniteproduct} is continuously differentiable on $I$; clearly so is the infinite product itself. Multiplying by the missing factors $1 \le n \le N_0$, it follows that the infinite product \eqref{Grdef} defining $G_r(\alpha, x)$ is uniformly convergent on $I$, $G_r (\alpha,x)$ is continuous on $I$, and continuously differentiable on $I$ except at its (finitely many) zeroes. This proves all claims on $G_r(\alpha,x)$.

Repeating the arguments above for the factors $1 \le n \le N_0$, we deduce
\[ \begin{split} &f(\delta_k +x/q_k) q_k \prod_{n=1}^{N_0} \frac{\left| f^2 \left( \frac{n}{q_k} - \left( \left\{ \frac{nq_{k-1}}{q_k} \right\} - \frac{1}{2} \right) \delta_k \right) - f^2 \left( \frac{2x+q_k \delta_k}{2q_k} \right) \right|}{f^2 \left( \frac{n}{q_k} \right)} \\&= 2 \pi |x+C_{[k]} E_{[k]}| \prod_{n=1}^{N_0} \left| \left( 1-C_{[k]} E_{[k]} \frac{\{ n \alpha_{[k]} \} - \frac{1}{2}}{n} \right)^2 - \frac{\left( x+ \frac{C_{[k]} E_{[k]}}{2} \right)^2}{n^2} \right| +O(q_k^{-2}) \end{split} \]
with an additive instead of multiplicative error term, since the factors are not bounded away from zero. Lemma \ref{cauchylemma} also shows that the product on the right hand side of \eqref{N0<n<psiqk} can be extended to all $N_0<n$ up to a negligible error, therefore combining \eqref{pqkproduct}--\eqref{N0<n<psiqk} and the previous formula, we obtain
\[ P_{q_k}(\alpha ,x) = \left( 1+O \left( \frac{\log q_k}{\psi (q_k)} + \frac{\psi (q_k)^3}{q_k^2} \right) \right) G_{[k]}(\alpha ,x) + O(q_k^{-2}). \]
The optimal choice is $\psi (t)=t^{1/2} \log^{1/4} t$. This finishes the proof of Theorem \ref{limittheorem}.
\end{proof}

\section*{Acknowledgements}

CA is supported by the Austrian Science Fund (FWF), projects F-5512, I-3466, I-4945 and Y-901. BB is supported by FWF project Y-901. We want to thank Agamemnon Zafeiropoulos for drawing our attention to the papers of Bettin and Drappeau, and for keeping us informed about his joint work with Grepstad and Neum\"uller.

\end{document}